\newcommand{\e}{\varepsilon}
\newcommand{\mchi}{\mathsf{m}\kern-1pt\chi}
\newcommand{\w}{\omega}
\newcommand{\diam}{\mathrm{diam}}
\newcommand{\dist}{\mathrm{dist}}
\newcommand{\IN}{\mathbb N}
\newcommand{\IR}{\mathbb R}
\newcommand{\U}{\mathcal U}
\newcommand{\asdim}{\mathrm{asdim}}
\newcommand{\Ra}{\Rightarrow}
\newcommand{\upa}{{\uparrow\kern-1pt}}
\newcommand{\cof}{\mathrm{cof}}
\newcommand{\coin}{\mathrm{coin}}
\newcommand{\IZ}{\mathbb Z}
\newcommand{\F}{\mathcal F}
\newcommand{\V}{\mathcal V}
\newtheorem{theorem}{Theorem}[section]
\newtheorem{problem}[theorem]{Problem}
\newtheorem{lemma}[theorem]{Lemma}
\newtheorem{claim}[theorem]{Claim}
\title{On character of points in the Higson corona of a metric space}
\author{Taras Banakh, Ostap Chervak, and Lubomyr Zdomskyy}
\address{T.Banakh: Ivan Franko National University of Lviv (Ukraine) and Jan Kochanowski University in Kielce (Poland)}
\email{t.o.banakh@gmail.com}
\address{O.Chervak:  Ivan Franko National University of Lviv, Universytetska 1, 79000, Ukraine}
\email{oschervak@gmail.com}
\address{L.Zdomskyy: Kurt G\"odel Research Center for Mathematical Logic,
University of Vienna, W\"ahringer Str. 25, 1090 Vienna, Austria}
\email{lzdomsky@logic.univie.ac.at}
\thanks{A part of this work was carried out during a visit of
the first named author in June 2011 at the Kurt G\"odel Research Center, supported by
FWF Grant M 1244-N13 of the third named author. The work of the third named author was fully
supported by the same grant. The first author has been partially financed by NCN grant  DEC-2011/01/B/ST1/01439.}
\subjclass{03E17, 03E35, 03E50, 54D35, 54E35, 54F45} 
\keywords{Higson corona, character of a point, ultrafilter number, dominating number}
\begin{document}
\begin{abstract} We prove that for an unbounded metric space $X$, the minimal
character $\mchi(\check X)$ of a point of the Higson corona $\check X$ of $X$
is equal to $\mathfrak u$ if $X$ has asymptotically isolated balls and to
$\max\{\mathfrak u,\mathfrak d\}$ otherwise.
This implies that under $\mathfrak u<\mathfrak d$ a metric space $X$ of
bounded geometry is coarsely equivalent to the Cantor macro-cube $2^{<\IN}$
if and only if $\dim(\check X)=0$ and $\mchi(\check X)=\mathfrak d$. This
contrasts with a result of Protasov saying that under CH the coronas of any
two asymptotically zero-dimensional unbounded metric separable spaces are
homeomorphic.
\end{abstract}
\maketitle

\section{Introduction}

In this paper we shall calculate the smallest character of a point in the
corona $\check X$ of a metric space $X$ and using this information shall
distinguish topologically the Higson coronas of some metric spaces of
asymptotic dimension zero. There are many ways of introducing the Higson
corona of a metric space. We shall follow the approach developed by I.V.Protasov in
\cite{Prot03} and \cite{Prot05}.

For an unbounded metric space $X$, let $\beta X_d$ be the Stone-\v Cech
compactification of the space $X$ endowed with the discrete topology. The
space $\beta X_d$ consists of all ultrafilters on $X$ and carries the compact
Hausdorff topology generated by the sets $\bar A=\{p\in\beta X:A\in p\}$
where $A$ runs over all subsets of $X$. In the space $\beta X_d$ consider the
closed subspace $X^\sharp$ consisting of all ultrafilters that extend the
filter $\F_0=\{X\setminus B:B$ is a bounded subset of $X\}$ of cobounded subsets of $X$.
Two ultrafilters $p,q\in X^\sharp$ are called {\em parallel}
(denoted by $p\parallel q$) if for some positive real number $\e$ we
get $\{B_\e(P):P\in p\}\subset q$ and $\{B_\e(Q):Q\in q\}\subset p$.
Here $B_\e(A)=\{x\in X:d_X(x,A)\le \e\}$ denotes the
$\e$-neighborhood of a subset $A$ of a metric space $(X,d_X)$. The
{\em corona}  $\check X$ of  $X$ is defined as the quotient space
$X^\sharp/_\sim$ of $X^\sharp$ by the smallest closed equivalence
relation $\sim$ on $X^\sharp$ that contains the parallel relation
$\parallel$ on $X^\sharp$. For an ultrafilter $p\in X^\sharp$ by
$\check p\in\check X$ we shall denote its equivalence class in the
corona $\check X$. For a subspace $A\subset X$ we identify the corona
$\check A$ with the subspace $\{\check p:A\in p\in X^\sharp\}$ of $\check X$.

By Proposition 1 of \cite{Prot05}, two ultrafilters $p,q\in X^\sharp$ belong
to the same equivalence class (which means that $\check p=\check q$) if and
only if for any slowly oscillating function $f:X\to [0,1]$ and its Stone-\v
Cech extension $\beta f:\beta X_d\to [0,1]$ we get $\beta f(p)=\beta f(q)$.
This allows us to define the corona $\check X$ of $X$ using
slowly oscillating functions. Let us recall that a function $f:X\to \IR$ is
{\em slowly oscillating} if for any $\e>0$ and any $\delta<\infty$ there is a
bounded subset $B\subset X$ such that for each subset $A\subset X\setminus B$
of diameter $\diam A\le \delta$ the image $f(A)$ has diameter $\diam
f(A)\le\e$. It follows that for a proper metric space $X$ the corona $\check X$
of $X$ coincides with the Higson corona $\nu(X)$ defined in \cite{Roe}.
Let us recall that a metric space $X$ is {\em proper} if each closed bounded
subset of $X$ is compact.

It is known that the coronas $\check X$ and $\check Y$ of two metric spaces
$(X,d_X)$ and $(Y,d_Y)$ are homeomorphic if the metric spaces $X,Y$ are {\em coarsely
equivalent} in the sense that there are two coarse functions $f:X\to Y$ and
$g:Y\to X$ such that $$\max\{\sup_{y\in Y}d_Y(f\circ g(y),y),\sup_{x\in
X}d_X(g\circ f(x),x)\}<\infty.$$ A function $f:X\to Y$ between two metric
spaces $(X,d_X)$ and $(Y,d_Y)$ is called {\em coarse} if for any
$\delta<\infty$ there is $\e<\infty$ such that for any points $x,x'\in X$
with $d_X(x,x')\le\delta$ we get $d_Y(f(x),f(x'))\le\e$.

The topological structure of the corona $\check X$ reflects certain
asymptotic properties of the metric space $X$. In particular, according to \cite{Dra},
\cite[\S5]{BD} for a proper metric space $X$ of finite asymptotic dimension $\asdim(X)$, the
corona $\check X$ has topological dimension $\dim(\check X)=\asdim(X)$. Let us recall that a metric space $X$ has asymptotic
dimension $\asdim(X)\le n$ if for any $\e<\infty$ there is a cover
$\U$ of $X$ such that $\sup_{U\in\U}\diam(U)<\infty$ and each
$\e$-ball $B_\e(x)$, $x\in X$, meets at most $(n+1)$ sets of the
cover $\U$. The finite or infinite number
$$\asdim(X)=\min\{n\in\IN\cup\{\infty\}:\asdim(X)\le n\}$$is called
the {\em asymptotic dimension} of $X$, see \cite{BD}.

It follows that for two proper metric spaces $X,Y$ with different finite
asymptotic dimensions the coronas $\check X$ and $\check Y$ are not
homeomorphic as they have different topological dimensions. On the other
hand, for metric spaces of asymptotic dimension zero I.V.~Protasov \cite{Prot11} proved the following
striking consistency result.

\begin{theorem}[Protasov]\label{t:Prot} Under Continuum Hypothesis the corona
$\check X$ of any asymptotically zero-dimensional unbounded separable metric
space $X$ is homeomorphic to the Stone-\v Cech remainder
$\w^*=\beta\w\setminus\w$ of the countable discrete space $\w$.
\end{theorem}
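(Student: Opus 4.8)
The plan is to recognise $\check X$ as $\w^*$ by verifying Parovi\v cenko's topological characterisation of $\w^*$, valid under CH: a space is homeomorphic to $\w^*$ if and only if it is a nonempty zero-dimensional compact Hausdorff space without isolated points, of weight $\le\mathfrak c$, in which every nonempty $G_\delta$-subset has nonempty interior and any two disjoint open $F_\sigma$-subsets have disjoint closures. So the proof reduces to checking these properties for the corona $\check X$ of an unbounded separable metric space $X$ with $\asdim(X)=0$, and CH is used only in the last line.

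First I would reduce to the case when $X$ is countable. Since $\asdim(X)=0$, there is a cover $\U$ of $X$ with $\sup_{U\in\U}\diam(U)<\infty$ such that every ball $B_1(x)$, $x\in X$, meets at most one member of $\U$. Such a $\U$ is a partition of $X$ into clopen sets: it is disjoint (if $x\in U\cap V$ with $U\ne V$ then $B_1(x)$ meets both $U$ and $V$), and each member is open (if $x\in U$ then $B_1(x)\subset U$). A separable space satisfies the countable chain condition, so $\U$ is countable, and choosing one point in each member of $\U$ gives a countable $D\subset X$ with $X=B_\e(D)$ for $\e=\sup_{U\in\U}\diam(U)<\infty$; thus $X$ is coarsely equivalent to $D$ and $\check X\cong\check D$. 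Hence we may assume $X$ is countable (and infinite, since $X$ is unbounded). Then $\beta X_d$ is homeomorphic to $\beta\w$, the set $X^\sharp$ is a closed subspace of $\w^*$, and $\check X$ — being the quotient of the compact space $X^\sharp$ by the closed equivalence relation $\sim$ — is compact, Hausdorff, and of weight $\le\mathfrak c$ (as a continuous image of $X^\sharp\subset\w^*$ and $\w^*$ has weight $\mathfrak c$); it is nonempty because $\F_0$ is a free filter, hence extends to an ultrafilter in $X^\sharp$.

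Zero-dimensionality and the absence of isolated points would come from $\asdim(X)=0$, used together with the elementary remark that a slowly oscillating function is almost constant on uniformly bounded pieces far from a base point: if $f:X\to[0,1]$ is slowly oscillating and $\diam(U)\le\delta$ for every member $U$ of a cover $\U$ of $X$, then for each $\e>0$ only a bounded part of $X$ is covered by the $U\in\U$ with $\diam f(U)>\e$. Fix $x_0\in X$, so $X=\bigcup_nB_n(x_0)$, and for each $n$ choose, by $\asdim(X)=0$, a uniformly bounded clopen partition $\U_n$ of $X$ whose distinct members are more than $n$ apart. Given $p,q\in X^\sharp$ with $\check p\ne\check q$, Proposition~1 of \cite{Prot05} yields a slowly oscillating $f:X\to[0,1]$ with $\beta f(p)=0\ne1=\beta f(q)$; assembling, annulus by annulus along $X=\bigcup_nB_n(x_0)$, the unions of those members of $\U_n$ on which $f$ stays below $\tfrac12$, one builds $A\subset X$ such that for every $n$ the sets $A$ and $X\setminus A$ are more than $n$ apart outside $B_n(x_0)$ (so $\check A$ is clopen in $\check X$) while $\{f<\tfrac13\}\setminus A$ and $A\cap\{f>\tfrac23\}$ are bounded (so $\check p\in\check A$ and $\check q\notin\check A$). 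Thus $\check X$ is zero-dimensional. And no basic clopen set $\check A$ with $A$ unbounded is a singleton, since an unbounded $A$ meets infinitely many members of the uniformly bounded partition $\U_1$ and so splits into two unbounded $1$-separated subsets carrying non-$\sim$-equivalent ultrafilters; hence $\check X$ has no isolated points.

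The remaining two conditions — that $\check X$ be an F-space, and that every nonempty $G_\delta$-set in $\check X$ have nonempty interior — are the heart of the matter, and I expect them to be the main obstacle. Call $A\subset X$ large-scale clopen if for every $\delta$ there is a bounded $B\subset X$ with $d_X(A\setminus B,(X\setminus A)\setminus B)>\delta$; by the previous paragraph the sets $\check A$ with $A$ large-scale clopen form a clopen base of $\check X$. For the F-space property one must show that for a bounded slowly oscillating $f:X\to\IR$ the subsets $\bigcup_k\{x:f(x)\ge\tfrac1k\}$ and $\bigcup_k\{x:f(x)\le-\tfrac1k\}$ of $X$ have coronas with disjoint closures; since slow oscillation forces $\{x:f(x)\ge\tfrac1k\}$ and $\{x:f(x)\le-\tfrac1k\}$ to drift infinitely far apart, this amounts to finding one large-scale clopen $E\subset X$ with $\{x:f(x)\ge\tfrac1k\}\setminus E$ and $E\cap\{x:f(x)\le-\tfrac1k\}$ bounded for all $k$. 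For the $G_\delta$-condition one is given a decreasing sequence of unbounded large-scale clopen sets $C_m$ representing a base of a nonempty $G_\delta$, and must produce an unbounded large-scale clopen $E$ with $E\setminus C_m$ bounded for all $m$. In both cases $E$ is obtained by diagonalisation along $X=\bigcup_nB_n(x_0)$, making $E$ coincide with the $j$-th prescribed set on the annulus $B_{n_{j+1}}(x_0)\setminus B_{n_j}(x_0)$ for a rapidly growing sequence $(n_j)$; taking these annuli much wider than the uniform diameter bounds of the partitions witnessing large-scale clopenness of the data keeps the seams between consecutive annuli coarsely invisible, so that $E$ is again large-scale clopen. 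Verifying exactly this — that the diagonal $E$ stays large-scale clopen — is the delicate step; once it is done, Parovi\v cenko's theorem identifies $\check X$ with $\w^*$ and completes the proof.
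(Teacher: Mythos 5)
This theorem is not proved in the paper at all: it is Protasov's result, quoted from \cite{Prot11}, so there is no internal argument to compare yours against. Your strategy --- verify Parovi\v cenko's characterisation of $\w^*$ under CH --- is in fact the route Protasov himself takes, and the routine parts of your outline are sound: the reduction to a countable coarsely dense subset via a uniformly bounded $1$-separated partition and ccc, the compactness/weight/nonemptiness of $\check X$, and the derivation of zero-dimensionality and of the absence of isolated points from $\asdim(X)=0$ (the clopen base of sets $\check A$ with $A$ asymptotically isolated is exactly what the paper's Theorem 3.1 and Lemma 2.6 establish).

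The gap is the one you yourself flag: the F-space property and the ``nonempty $G_\delta$ has nonempty interior'' property are asserted to follow from a diagonalisation whose crucial verification --- that the diagonal set $E$ is again asymptotically isolated --- you do not carry out, and the hint you give (``take the annuli much wider than the diameter bounds of the witnessing partitions'') does not address the actual failure mode. Concretely, in the $G_\delta$ case, if you set $E\cap A_j=C_j\cap A_j$ on annuli $A_j=B_{n_{j+1}}(x_0)\setminus B_{n_j}(x_0)$, then a point $x\in C_j\setminus C_{j+1}$ just inside $A_j$ lies in $E$ while a point $y\in C_j\setminus C_{j+1}$ just inside $A_{j+1}$ does not, and nothing in the asymptotic isolatedness of the individual $C_m$'s or in the width of the annuli prevents $d(x,y)$ from being small arbitrarily far from $x_0$: the set $C_j\setminus C_{j+1}$ may straddle the sphere of radius $n_{j+1}$. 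Repairing this requires an extra input from $\asdim(X)=0$ that your sketch does not invoke at this point: fix uniformly bounded $m$-separated partitions $\U_m$ of $X$, note that each asymptotically isolated set is, off a bounded set, a union of whole pieces of $\U_m$, and build $E$ on the $j$-th annulus as a union of entire pieces of $\U_j$, so that any pair $x\in E$, $y\notin E$ far from $x_0$ necessarily lies in distinct pieces of some $\U_m$ with $m\to\infty$. The same rounding is needed for the F-space step. Until that is written out, the two conditions on which Parovi\v cenko's theorem actually turns remain unproved, so the proposal is an essentially correct plan rather than a proof.
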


In a private communication with the first author, I.V.Protasov asked if his Theorem~\ref{t:Prot} remains true in ZFC.
In this paper we shall give a negative answer to this question of Protasov, calculating
the minimal character $\mchi(\check X)$ of
the corona $\check X$ for a metric space $X$.

By definition, the {\em minimal character} $\mchi(X)$ of a topological space
$X$ is the smallest character $\min\limits_{x\in X}\chi(x;X)$ of a point $x$
in $X$, where the {\em character} $\chi(x;X)$ of $x$ in $X$ is equal to the
smallest cardinality of a neighborhood base  at $x$. The minimal character
$\mchi(\w^*)$ of the Stone-\v Cech remainder $\w^*=\beta\w\setminus\w$ is
denoted by $\mathfrak u$ and is one of important small uncountable cardinals,
see \cite{vD}, \cite{Vau}, \cite{Blass}.
Another small uncountable cardinal that will appear in our considerations is
the dominating number $\mathfrak d$, equal to the cofinality of the
partially ordered set $(\w^\w,\le)$, see \cite{vD}, \cite{Vau}, \cite{Blass}.

The cardinals $\mathfrak u$ and $\mathfrak d$ both are equal to the continuum
$\mathfrak c$ under Continuum Hypothesis and more generally under Martin's
Axiom, see \cite{Vau}, \cite{Fremlin}.  On the other hand, the strict inequalities $\mathfrak
u<\mathfrak d$ and $\mathfrak u>\mathfrak d$ also are consistent with ZFC,
see \cite[p.480]{Blass}.

Following \cite{BZ}, we shall say that a metric space $(X,d)$ has {\em
asymptotically isolated balls} if there is $\e<\infty$ such that for any
finite $\delta\ge\e$ there is $x\in X$ such that the $\e$-ball $B_\e(x)$
centered at $x$ coincides with the $\delta$-ball $B_\delta(x)$.


The principal result of this paper is the following theorem that shows that
the conclusion of Protasov's Theorem~\ref{t:Prot} is not true under
$\mathfrak u<\mathfrak d$:

\begin{theorem}\label{main} The corona $\check X$ of an unbounded metric space $X$ has minimal character
$$\mchi(\check X)=\begin{cases}
\mathfrak u&\mbox{if $X$ contains asymptotically isolated balls},\\
\max\{\mathfrak u,\mathfrak d\}&\mbox{otherwise}.
\end{cases}
$$
\end{theorem}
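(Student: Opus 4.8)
The plan is to prove the four inequalities that together give the formula: $\mchi(\check X)\ge\mathfrak u$ for every unbounded $X$; $\mchi(\check X)\le\mathfrak u$ when $X$ has asymptotically isolated balls; and $\mchi(\check X)\ge\mathfrak d$ together with $\mchi(\check X)\le\max\{\mathfrak u,\mathfrak d\}$ when $X$ does not. Two ``dictionary'' facts linking the topology of $\check X$ to the coarse geometry of $X$ will be used throughout: first, for $A\subseteq X$ the set $\check A$ is clopen in $\check X$ if and only if $A$ is \emph{asymptotically separated} from $X\setminus A$, meaning that $B_\e(A)\cap(X\setminus A)$ is bounded for every finite $\e$ (equivalently, the characteristic function of $A$ is slowly oscillating and so extends to a continuous map $\check X\to\{0,1\}$); second, $\check A$ is a neighbourhood of a point $\check p$, $p\in X^\sharp$, if and only if the $\e$-interior $X\setminus B_\e(X\setminus A)$ belongs to $p$ for some finite $\e$. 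It is also harmless to assume $X$ is $1$-discrete, since replacing $X$ by a maximal $1$-discrete subset $\Delta$ (with the inclusion and the nearest-point retraction as mutually inverse coarse equivalences) changes neither $\check X$ up to homeomorphism nor the presence of asymptotically isolated balls.

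For the upper bound $\mchi(\check X)\le\mathfrak u$ when $X$ has asymptotically isolated balls, I would fix a finite $\e$ witnessing the property, choose $x_n\in X$ and radii $\delta_n\to\infty$ with $B_\e(x_n)=B_{\delta_n}(x_n)$, and pass to a subsequence so that the bounded sets $P_n=B_\e(x_n)$ tend to infinity and $\dist(P_n,X\setminus P_n)>\delta_n-\e\to\infty$. Then $D:=\bigcup_nP_n$ is asymptotically separated from $X\setminus D$, so by the first dictionary fact $\check D$ is clopen in $\check X$; and the map sending an ultrafilter $q\in X^\sharp$ with $D\in q$ to $\{S\subseteq\w:\bigcup_{n\in S}P_n\in q\}$ descends to a homeomorphism $\check D\to\w^*$. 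Since $\check D$ is open in $\check X$, the character of a point of $\check D$ is the same whether computed in $\check D$ or in $\check X$, and therefore $\mchi(\check X)\le\mchi(\check D)=\mchi(\w^*)=\mathfrak u$.

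For the lower bounds, fix $p\in X^\sharp$ and a neighbourhood base $\mathcal B$ at $\check p$; using the second dictionary fact we may assume each $B\in\mathcal B$ is of the form $\check A_B$ with $X\setminus B_{\e_B}(X\setminus A_B)\in p$ for a finite $\e_B$. Fixing an exhaustion of $X$ by bounded balls and the induced partition $X=\bigsqcup_nX_n$ into annuli, $p$ induces a free ultrafilter $\hat p$ on $\w$, and --- after some bookkeeping with the ``boundary layers'' of the $A_B$'s, where working with $1$-discrete $X$ is convenient --- the base $\mathcal B$ can be turned into a base for $\hat p$, giving $\chi(\check p;\check X)\ge\chi(\hat p)\ge\mathfrak u$. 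When $X$ has no asymptotically isolated balls, negating the definition yields finite radii $\e_0<\e_1<\cdots\to\infty$ such that for every $x\in X$ and every $k$ the ball $B_{\e_k}(x)$ is a proper subset of $B_{\e_{k+1}}(x)$. To each $B\in\mathcal B$ one then attaches a function $f_B\in\w^\w$ recording, annulus by annulus, how fast $A_B$ ``resolves''; if $|\mathcal B|<\mathfrak d$, pick $g\in\w^\w$ not dominated by any $f_B$ and, using $g$ together with the strict inclusions $B_{\e_k}(x)\subset B_{\e_{k+1}}(x)$, build an open neighbourhood $\check V$ of $\check p$ that ``resolves no faster than $g$''. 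On the infinitely many annuli on which $g$ overtakes $f_B$ there is then room to place a point of $\check A_B$ in $\check X\setminus\check V$, so $\check A_B\not\subseteq\check V$ and hence $B\not\subseteq\check V$, contradicting that $\mathcal B$ is a base; thus $\chi(\check p;\check X)\ge\mathfrak d$, and combined with the previous step $\mchi(\check X)\ge\max\{\mathfrak u,\mathfrak d\}$.

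Finally, for the upper bound $\mchi(\check X)\le\max\{\mathfrak u,\mathfrak d\}$ in the non-isolated case, I would exhibit one convenient point $\check p$ --- sitting over a partition of $X$ into rapidly growing annuli --- whose neighbourhood filter is generated by neighbourhoods indexed by pairs consisting of a member of a fixed base of a character-$\mathfrak u$ ultrafilter on $\w$ and a ``scale function'' in $\w^\w$; replacing the scale functions by a dominating family of size $\mathfrak d$ yields a neighbourhood base at $\check p$ of cardinality $\max\{\mathfrak u,\mathfrak d\}$. The main obstacle throughout is precisely the non-isolated case and the bounds involving $\mathfrak d$: one must pin down the correspondence between neighbourhoods of $\check p$ and pairs ``(trace on $\w$, resolution rate)'', make rigorous the sense in which strict growth of balls produces the ``room'' needed to push a point of $\check A_B$ out of a candidate neighbourhood, and then carry out both the $\mathfrak d$-diagonalization and the dual construction with a dominating family. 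By contrast, with the dictionary facts in hand, the clopen copy of $\w^*$ giving $\mchi(\check X)\le\mathfrak u$ in the isolated case is essentially immediate, and the extraction of an ultrafilter base for the general lower bound $\mchi(\check X)\ge\mathfrak u$ is comparatively routine.
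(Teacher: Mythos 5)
Three of your four inequalities follow the paper's route closely: the clopen copy of $\w^*$ in the presence of asymptotically isolated balls is the paper's Lemma~\ref{l8}, the general lower bound $\mchi(\check X)\ge\mathfrak u$ via the ultrafilter $\phi(p)$ induced on the annuli is Lemma~\ref{l6} (though your ``after some bookkeeping'' hides the real content there: a radius function $f\in\w^{\upa X}$ smears a set across neighbouring annuli, which is why the paper needs a boundedly oscillating $\phi$ and the residue-classes-mod-$l$ trick to convert neighbourhoods of $\check p$ into neighbourhoods of $\phi(p)$), and the upper bound $\max\{\mathfrak u,\mathfrak d\}$ via a character-$\mathfrak u$ ultrafilter pushed along a sequence tending to infinity, together with a coinitial family of radius functions of size $\mathfrak d$, is Lemmas~\ref{l1} and~\ref{l5}.

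The genuine gap is in the lower bound $\chi(\check p,\check X)\ge\mathfrak d$ for the non-isolated case, and it is not merely a matter of unfinished bookkeeping. First, the direction of your diagonalization is off: to show that a basic neighbourhood $\check B(P_\alpha,f_\alpha)$ is \emph{not} contained in your candidate $\check V=\check B(P,g)$, you must exhibit an unbounded subset of $B(P_\alpha,f_\alpha)$ asymptotically disjoint from $B(P,g)$; such witnesses sit at distance roughly $f_\alpha$ from $P_\alpha$ but at distance tending to infinity from $B(P,g)$, which requires $g$ to be \emph{small} relative to $f_\alpha$ at the relevant points (this is also exactly where the absence of asymptotically isolated balls is needed, to guarantee that points at those intermediate distances exist at all). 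Second, and more fundamentally, the annuli on which this comparison must hold are not arbitrary infinite sets: they must carry an unbounded piece of $P_\alpha\cap P$, i.e.\ they must form a set in the ultrafilter $\phi(p)$. So the diagonalization is governed not by eventual domination in $\w^\w$ but by the order $\le_{\phi(p)}$, and the cardinal you actually get from the geometric argument is the coinitiality $\mathfrak q(\phi(p))=\coin(\w^{\upa\w},\le_{\phi(p)})$, which by Canjar's example can consistently be strictly smaller than $\mathfrak d$. Choosing ``$g$ not dominated by any $f_B$'' when $|\mathcal B|<\mathfrak d$ therefore does not produce a neighbourhood defeating every $B$. The paper closes this gap with a set-theoretic dichotomy you never invoke (Lemma~\ref{l2}, after Blass): every ultrafilter $\U\in\w^*$ with $\chi(\U)<\mathfrak d$ satisfies $\mathfrak q(\U)=\mathfrak d$, so $\max\{\chi(\U),\mathfrak q(\U)\}\ge\max\{\mathfrak u,\mathfrak d\}$; combining the two separate lower bounds $\chi(\check p,\check X)\ge\chi(\phi(p),\w^*)$ and $\chi(\check p,\check X)\ge\mathfrak q(\phi(p))$ then yields $\ge\max\{\mathfrak u,\mathfrak d\}$, with \emph{which} of the two bounds supplies the $\mathfrak d$ depending on the ultrafilter $\phi(p)$. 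Without this dichotomy (or an argument replacing it), your pointwise claim $\chi(\check p;\check X)\ge\mathfrak d$ is not established.
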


This theorem will be proved in Section~\ref{s:pf-mainth}. Now we shall derive
from Theorem~\ref{main} a corona characterization of the Cantor macro-cube.

The {\em Cantor macro-cube} $2^{<\IN}$ is the metric space
$$2^{<\IN}=\{(x_i)_{i=1}^\infty\in\{0,1\}^\IN:\exists n\in\IN\;\;\forall m\ge
n\;\;x_m=0\}$$endowed with the ultrametric
$$d\big((x_n),(y_n)\big)=\max_{n\in\IN}2^n|x_n-y_n|.$$
By \cite{DZ}, the Cantor macro-cube contains a coarse copy of each
asymptotically zero-dimensional metric space of bounded geometry. Let us
recall that a metric space $X$ has {\em bounded geometry} if there is
$\e<\infty$ such that for every $\delta<\infty$ there is an integer number
$N\in\IN$ such that each $\delta$-ball in $X$ can be covered by $\le N$ balls
of radius $\e$.

The Cantor macro-cube $2^{<\IN}$ is an asymptotic counterpart of the Cantor
cube $2^\w$. According to the classical Brouwer characterization
\cite[7.4]{Ke}, a topological space $X$ is homeomorphic to the Cantor cube
$2^\w$ if and only if $X$ is a zero-dimensional compact metrizable space
without isolated points. A similar characterization holds also for the Cantor
macro-cube \cite{BZ}:
{\em a metric space $X$ is coarsely equivalent to the Cantor macro-cube
$2^{<\IN}$ of and only if
$X$ is an asymptotically zero-dimensonal space of bounded geometry without
asymptotically isolated balls.}

This characterization,  combined with Theorem~\ref{main}, implies the
following ``corona'' characterization of $2^{<\IN}$, which will be proved in
Section~\ref{s2}.

\begin{theorem}\label{t2} Under $\mathfrak u<\mathfrak d$ for a metric space
$X$ of bounded geometry the following conditions are equivalent:
\begin{enumerate}
\item $X$ is coarsely equivalent to $2^{<\IN}$;
\item the corona $\check X$ of $X$ is homeomorphic to the corona of
$2^{<\IN}$;
\item $\dim \check X=0$ and $\mchi(\check X)=\mathfrak d$.
\end{enumerate}
\end{theorem}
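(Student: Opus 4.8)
The plan is to prove the cycle of implications $(1)\Rightarrow(2)\Rightarrow(3)\Rightarrow(1)$, the last of which carries the real content. The implication $(1)\Rightarrow(2)$ is immediate from the fact recalled in the introduction that coarsely equivalent metric spaces have homeomorphic coronas.

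For $(2)\Rightarrow(3)$ it suffices, since $\dim$ and $\mchi$ are topological invariants, to evaluate these two cardinal functions on the corona of the macro-cube $2^{<\IN}$ itself. The space $2^{<\IN}$ is unbounded and proper (each of its closed bounded subsets is finite); moreover it is asymptotically zero-dimensional and has no asymptotically isolated balls --- both properties being contained in the characterization of $2^{<\IN}$ from \cite{BZ} quoted above, and in any case easy to verify directly from the definition of the ultrametric. Hence the corona of $2^{<\IN}$ has dimension $\asdim 2^{<\IN}=0$ by the dimension theorem of \cite{Dra}, \cite[\S5]{BD}, and minimal character $\max\{\mathfrak u,\mathfrak d\}=\mathfrak d$ by Theorem~\ref{main} and the assumption $\mathfrak u<\mathfrak d$. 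Transporting these equalities along the homeomorphism furnished by $(2)$ gives $(3)$.

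For $(3)\Rightarrow(1)$ assume $\dim\check X=0$ and $\mchi(\check X)=\mathfrak d$. Since $\dim\check X=0$ the corona $\check X$ is nonempty, so $X$ is unbounded and Theorem~\ref{main} is applicable; as $\mchi(\check X)=\mathfrak d>\mathfrak u$, the space $X$ cannot contain asymptotically isolated balls. It then remains to check that $X$ is asymptotically zero-dimensional, for then the characterization of \cite{BZ} --- using the standing hypothesis that $X$ has bounded geometry --- yields that $X$ is coarsely equivalent to $2^{<\IN}$, which is condition $(1)$. To obtain $\asdim X=0$ I would first replace $X$ by a coarsely equivalent uniformly discrete metric space of bounded geometry (a maximal separated net in $X$); such a space is proper, and the replacement affects neither $\check X$ up to homeomorphism, nor $\asdim X$, nor coarse equivalence with $2^{<\IN}$. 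For the resulting proper space the dimension results of \cite{Dra}, \cite[\S5]{BD} give $\asdim X\le\dim\check X=0$, whence $\asdim X=0$.

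The point I expect to be delicate is precisely this last conversion of $\dim\check X=0$ into $\asdim X=0$: it requires the inequality $\asdim X\le\dim\check X$ for proper metric spaces \emph{without} an a priori finiteness assumption on $\asdim X$ --- equivalently, the fact that a proper metric space of infinite asymptotic dimension has a non-zero-dimensional corona. I would either extract this from \cite{Dra}, \cite{BD} or prove it by hand, exploiting the compactness of $\check X$ together with the correspondence, mediated by slowly oscillating functions, between finite clopen partitions of $\check X$ and uniformly bounded covers of $X$ whose members can be separated by arbitrarily large gaps. The remaining ingredients --- the reduction to a proper space via a maximal separated net, and the verification that $2^{<\IN}$ is proper and without asymptotically isolated balls --- are routine.
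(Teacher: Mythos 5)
Your cycle $(1)\Rightarrow(2)\Rightarrow(3)\Rightarrow(1)$ and the treatment of the first two implications (transporting $\dim$ and $\mchi$ along the homeomorphism after computing them for $2^{<\IN}$ via Theorem~\ref{main} and the absence of asymptotically isolated balls) coincide with the paper's argument. You have also correctly isolated the only nontrivial point of $(3)\Rightarrow(1)$: converting $\dim\check X=0$ into $\asdim X=0$. But your primary plan for that step does not work, and your fallback is only a sketch of what is in fact the main technical content of this implication. The results of \cite{Dra} and \cite[\S5]{BD} give $\dim(\check X)=\asdim(X)$ only for proper spaces of \emph{finite} asymptotic dimension; they do not yield $\asdim X\le\dim\check X$ without that a priori finiteness, and the paper explicitly records that whether finiteness of $\dim(\check X)$ forces finiteness of $\asdim(X)$ is an open problem. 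So passing to a maximal separated net (which is in any case unnecessary here) and then citing \cite{Dra}, \cite{BD} leaves the gap exactly where it was.

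The paper closes this gap with a dedicated result, Theorem~\ref{t:dim}, valid for arbitrary unbounded metric spaces: $\asdim X=0$ iff $\sup_{x\in X}\diam C_\e(x)<\infty$ for every $\e$, iff $\dim\check X=0$. Its proof of $(3)\Rightarrow(2)$ is the substantive part: assuming some $\e$ admits $\e$-components of unbounded diameter, one builds (Claim~\ref{cl3.2}) a sequence of bounded $\e$-connected sets $C_n$ with $\diam C_n>n$ that recede to infinity and are far from each other, picks endpoints $x_n,y_n\in C_n$ with $d(x_n,y_n)>n$, pushes an ultrafilter through both sequences to get $\check p\ne\check q$, separates them by clopen sets, identifies those clopen sets with asymptotically isolated subsets $U,V$ of $X$ (Lemma~\ref{l5a}), and derives a contradiction by walking an $\e$-chain in some $C_m$ from $U$ to $V$: since $U=B(U,f)$ with $f>\e$ far from the basepoint, the chain can never leave $U$. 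Your one-sentence description of ``finite clopen partitions versus uniformly bounded covers separated by large gaps'' points in this direction, but as written it neither proves the needed correspondence (Lemma~\ref{l5a} requires a genuine compactness-and-shrinking argument) nor executes the chain construction; without these, $(3)\Rightarrow(1)$ is not established.
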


Another universal metric space is the {\em Baire macro-space}
$$\w^{<\IN}=\{(x_i)_{i=1}^\infty\in\w^\IN:\exists n\in\IN\;\;\forall m\ge
n\;\;x_m=0\}$$endowed with the ultrametric
$$d\big((x_n),(y_n)\big)=\max(\{0\}\cup\{2^n:x_n\ne y_n\}).$$
The Baire macro-space contains a coarse copy of each separable metric space
of asymptotic dimension zero. Metric spaces that are coarsely equivalent to
the Baire macro-space $\w^{<\IN}$ have been characterized in \cite{BZ2}. By
\cite{Prot11}, under CH the coronas of the metric spaces $2^{<\IN}$ and
$\w^{<\IN}$ are homeomorphic to $\w^*$.

\begin{problem} Can the coronas of the metric spaces $2^{<\IN}$ and
$\w^{<\IN}$ be homeomorphic under the negation of the Continuum Hypothesis?
\end{problem}

\section{Preliminaries}

In this section we collect some information that will be used in the next sections.

By a {\em partial preorder} on a set $P$ we understand any reflexive transitive binary relation $\le$ on $P$. A subset $A\subset P$ of a partially preordered space $(P,\le)$  is called
\begin{itemize}
\item {\em cofinal} in $(P,\le)$ if for each $x\in X$ there is $y\in A$ with $x\le y$;
\item {\em coinitial} in $(P,\le)$ if  for each $x\in X$ there is $y\in A$ with $y\le x$.
\end{itemize}
The smallest cardinality of a cofinal (resp. coinitial) subset of $(P,\le)$ is denoted by $\cof(P)$ (resp. \mbox{$\coin(P)$}) and called the {\em cofinality} (resp. {\em coinitiality}) of $(P,\le)$.

For example, the character $\chi(x,X)$ of a topological space $X$ is equal to the coinitiality of the
 set $\mathcal N_x$ of all neighborhoods of $X$, partially ordered by the inclusion relation $\subset$.

We shall be interested in the cofinality and coinitiality of some function spaces on metric spaces.

A function $f:X\to Y$ between metric spaces is defined to be {\em bounded-to-bounded} if a subset $B\subset X$ is bounded in $X$ if and only if its image $f(B)$ is bounded in $Y$.
We shall be especially interested in bounded-to-bounded functions with values in the space $\w$ of non-negative integers, endowed with the standard Euclidean metric. Observe that a subset $B\subset\w$ is bounded if and only if it is finite. So, a function $\phi:\w\to\w$ is bounded-to-bounded if and only if it is {\em finite-to-one} in the sense that for each $n\in\w$ the preimage $\phi^{-1}(n)$ is finite.

The family of all bounded-to-bounded functions $f:X\to \w$ on a metric space $X$ will be denoted by $\w^{\uparrow X}$. The set $\w^{\upa X}$ carries a natural partial order $\le$ in which $f\le g$ iff $f(x)\le g(x)$ for all $x\in X$.

\begin{lemma}\label{l1} For an unbounded metric space $X$ the partially ordered set $(\w^{\upa X},\le)$ has coinitiality $$\coin(\w^{\upa X})\le\mathfrak d.$$
\end{lemma}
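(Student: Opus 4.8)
The plan is to reduce the statement about $\w^{\upa X}$ to the classical fact that $(\w^\w,\le)$ has cofinality $\mathfrak d$. The key observation is that ``coinitiality of finite-to-one functions'' on $X$ is really the same invariant as ``cofinality of $\w^\w$'', transported along a suitable bounded-to-bounded map from $X$ to $\w$. So the first step is to produce a bounded-to-bounded function $h\colon X\to\w$. Since $X$ is unbounded, one can choose a base point $x_0\in X$ and set $h(x)=\lfloor d_X(x,x_0)\rfloor$; a subset $B\subset X$ is bounded in $X$ iff $h(B)$ is bounded (i.e. finite) in $\w$, so $h\in\w^{\upa X}$ and $h$ is bounded-to-bounded. (One should check $h$ is unbounded, which follows from unboundedness of $X$; if $X$ happened to have all integer-part distances equal one can pass to $\lceil d_X(x,x_0)\rceil$ or rescale the metric — a routine adjustment.)

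Next, fix a dominating family $D=\{\phi_\alpha:\alpha<\mathfrak d\}\subset\w^\w$ of size $\mathfrak d$; without loss of generality each $\phi_\alpha$ is strictly increasing and $\phi_\alpha(n)\ge 1$ for all $n$, and also $\phi_\alpha\ge\mathrm{id}$. For each $\alpha$ define $g_\alpha\colon X\to\w$ by $g_\alpha(x)=\phi_\alpha(h(x))$. Since $\phi_\alpha$ is finite-to-one and $h$ is bounded-to-bounded, each $g_\alpha$ is bounded-to-bounded, so $g_\alpha\in\w^{\upa X}$. I claim $\{g_\alpha:\alpha<\mathfrak d\}$ is coinitial in $(\w^{\upa X},\le)$. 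Indeed, take an arbitrary $f\in\w^{\upa X}$; we must find $\alpha$ with $g_\alpha\le f$. Since $f$ is bounded-to-bounded, for each $n\in\w$ the set $h^{-1}([0,n])$ is bounded, so $f$ is bounded on it, and we may define $\psi(n)=\max\{\,f(x): x\in X,\ h(x)\le n\,\}+1$ (with the convention $\psi(n)=1$ if the set is empty). Then $\psi\in\w^\w$, so by the dominating property there is $\alpha$ with $\psi(n)\le\phi_\alpha(n)$ for all but finitely many $n$. By modifying $\phi_\alpha$ on a finite set — which does not leave the dominating family if we close it under finite modifications, or alternatively by building the estimate into the definition of the family — we may assume $\psi\le\phi_\alpha$ everywhere. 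Hmm, closing a family of size $\mathfrak d$ under finite modifications keeps it of size $\mathfrak d$, so this is harmless; alternatively one argues directly with ``eventual'' domination and patches the finitely many exceptional levels, which is where the mild care is needed.

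With that in hand, for every $x\in X$ we have $h(x)\in\w$, and $f(x)<\psi(h(x))\le\phi_\alpha(h(x))=g_\alpha(x)$, so $g_\alpha\le f$ (in fact $g_\alpha(x)>f(x)$ pointwise, which certainly gives $g_\alpha\ge f$ — wait, we need $g_\alpha\le f$, not $\ge$). Let me restate correctly: coinitial means ``below everything'', so we want $g_\alpha\le f$. The construction above gives $g_\alpha\ge f$, so it is the \emph{wrong} direction; the fix is to run the argument with minima instead of maxima. Define $\eta(n)=\min\{f(x): x\in X,\ n\le h(x)\le n+1\ \text{nonempty}\}$ — but $f$ need not go to infinity on the ``sphere'', so this fails too. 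The genuinely correct approach: since $f$ is bounded-to-bounded, $f^{-1}([0,m])$ is bounded for each $m$, hence contained in $h^{-1}([0,k_m])$ for some $k_m$; equivalently $h(x)\le k_{f(x)}$ for all $x$, i.e. letting $\theta(m)=k_m$ we get $h(x)\le\theta(f(x))$. Taking $\theta$ increasing and finding $\alpha$ with $\theta\le\phi_\alpha$ pointwise (same finite-modification remark), we get $h(x)\le\phi_\alpha(f(x))$, i.e. $f(x)\ge\phi_\alpha^{-1}(h(x))$ where $\phi_\alpha^{-1}(n):=\min\{j:\phi_\alpha(j)\ge n\}$ is finite-to-one. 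So redefining $g_\alpha(x):=\phi_\alpha^{-1}(h(x))$ — which lies in $\w^{\upa X}$ since $\phi_\alpha^{-1}$ is finite-to-one and $h$ is bounded-to-bounded — we obtain $g_\alpha\le f$. Hence $\{g_\alpha:\alpha<\mathfrak d\}$ is coinitial and $\coin(\w^{\upa X})\le\mathfrak d$.

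The main obstacle is purely bookkeeping: getting the direction of the inequality right and handling the ``finitely many exceptional levels'' cleanly (by closing the dominating family under finite modifications, or by absorbing the finite patch into $h$). The conceptual content — that a bounded-to-bounded $h\colon X\to\w$ pulls a cofinal family in $\w^\w$ back, via inverse functions, to a coinitial family in $\w^{\upa X}$ — is straightforward once the map $h$ is fixed.
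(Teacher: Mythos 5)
Your final (corrected) argument is essentially the paper's proof: both fix a bounded-to-bounded function $X\to\w$, pull back a dominating family of size $\mathfrak d$ through a generalized inverse $\phi_\alpha^{-1}(n)=\min\{j:\phi_\alpha(j)\ge n\}$ (the paper's $\bar f$), and dominate the auxiliary function recording how far out in $h$-levels the sublevel sets $f^{-1}([0,m])$ reach (the paper's $\tilde f$). The argument is correct once you discard the false start in the wrong direction; the handling of finitely many exceptional levels by closing the dominating family under finite modifications is fine.
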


\begin{proof}
Choose any bounded-to-bounded function $\phi:X\to\w$.
By definition of the cardinal $\mathfrak d=\cof(\w^{\upa\w})$, there exits a cofinal set  $\F\subset\w^{\upa\w}$ of cardinality $|\F|=\mathfrak d$.

For each function $f\in\F$, consider the function $\bar f\in\w^{\upa\w}$ defined by
$$\bar f(n)=\max\big(\{0\}\cup\{k\in\w:f(k)\le n\}\big).$$
We claim that the family $\mathcal E=\{\bar f\circ\phi:f\in\F\}$ is coinitial in $\w^{\upa X}$ and hence $\coin(\w^{\upa X})\le|\mathcal E|\le|\F|=\mathfrak d$.

Indeed, take any function $g\in\w^{\upa X}$ and consider the function $\tilde g\in\w^{\upa \w}$ defined by
$$\tilde g(n)=\min g\big(\phi^{-1}([n,\infty))\big)\mbox{ \ for $n\in\w$}.$$
Next, consider the function $\tilde f\in\w^{\upa\w}$ defined by $$\tilde f(k)=\min(\tilde g^{-1}([k+1,\infty))\mbox{ for $k\in\w$}$$and choose any function $f\in\F$ with $\tilde f\le f$.

We claim that $\bar f\circ\phi\le g$. Take any point $x\in X$ and consider the number $n=\phi(x)$. Then $\tilde g(n)\le g(x)$. Let $k=\tilde g(n)$ and observe that $$n\le \max \tilde g^{-1}(k)<\min \tilde g^{-1}([k+1,\infty))=\tilde f(k)\le f(k).$$
Now the defintion of $\bar f(n)$ implies that
$$\bar f\circ \phi(x)=\bar f(n)\le k=\tilde g(n)\le g(x).$$
\end{proof}

Now consider the space $\w^{\upa\w}$ of bounded-to-bounded (=finite-to-one) functions on $\w$.
Besides the coinitiality of the partial order $\le$ on $\w^{\upa\w}$ we shall be interested in the coinitiality of $\w^{\upa\w}$ endowed with the linear preorder $\le_\U$ generated by an ultrafilter $\U\in\w^*$.
For two functions $f,g\in\w^{\upa\w}$ we write $f\le_\U g$ if the set $\{n\in\w:f(n)\le g(x)\}$ belongs to the ultrafilter $\U$. Following \cite{BZbook}, we denote by $\mathfrak q(\U)=\coin(\w^{\upa\w},\le_\U)$ and $\mathfrak d(\U)=\cof(\w^{\upa\w},\le_\U)$ the coinitiality and the cofinality of the linearly preordered space $(\w^{\upa\w},\le_\U)$. It is clear that $\max\{\mathfrak q(\U),\mathfrak d(\U)\}\le\mathfrak d$. In \cite{Canjar} M.Canjar constructed a ZFC-example of an ultrafilter $\U\in\w^*$ with $\mathfrak q(\U)=\mathfrak d(\U)=\mathrm{cf}(\mathfrak d)$, which can be consistently smaller than $\mathfrak d$.

The following lemma can be proved by analogy with Theorem 16 of \cite{Bla86}, see also Theorem 9.4.6 of \cite{BZbook} or \cite[pp.82,85]{BZsurv}. In this Lemma $\chi(\U)$ denotes the character of an ultrafilter $\U\in\w^*$ in the Stone-\v Cech compactification $\beta(\w)$ of $\w$.

\begin{lemma}\label{l2} Any ultrafilter $\U\in\w^*$ with character $\chi(\U)<\mathfrak d$ has
$\mathfrak q(\U)=\mathfrak d(\U)=\mathfrak d$. Consequently, $$\max\{\chi(\U),\mathfrak q(\U)\}=\max\{\chi(\U),\mathfrak d(\U)\}=\max\{\chi(\U),\mathfrak d\}\ge\max\{\mathfrak u,\mathfrak d\}$$for any ultrafilter $\U\in\w^*$.
\end{lemma}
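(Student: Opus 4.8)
The plan is to deduce everything from the first assertion, that $\chi(\U)<\mathfrak d$ implies $\mathfrak q(\U)=\mathfrak d(\U)=\mathfrak d$. Granting this, the displayed equalities follow by a trivial case split: if $\chi(\U)\ge\mathfrak d$ then $\mathfrak q(\U),\mathfrak d(\U)\le\mathfrak d\le\chi(\U)$, and if $\chi(\U)<\mathfrak d$ then $\mathfrak q(\U)=\mathfrak d(\U)=\mathfrak d$, so in either case $\max\{\chi(\U),\mathfrak q(\U)\}=\max\{\chi(\U),\mathfrak d(\U)\}=\max\{\chi(\U),\mathfrak d\}$; and this is $\ge\max\{\mathfrak u,\mathfrak d\}$ because $\chi(\U)\ge\mchi(\w^*)=\mathfrak u$. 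So assume $\chi(\U)<\mathfrak d$; since $\max\{\mathfrak q(\U),\mathfrak d(\U)\}\le\mathfrak d$ was observed before the lemma, it remains to prove $\mathfrak d(\U)\ge\mathfrak d$ and $\mathfrak q(\U)\ge\mathfrak d$. Fix a base $\mathcal B$ of the ultrafilter $\U$ with $|\mathcal B|=\chi(\U)$, and for $B\in\mathcal B$ let $e_B:\w\to B$ be the increasing bijection, so that $e_B(k)\ge k$ for all $k$. In both cases the idea is the same: out of a cofinal (resp.\ coinitial) subset of $(\w^{\upa\w},\le_\U)$ together with $\mathcal B$ we build a family of bounded-to-bounded functions cofinal in $(\w^{\upa\w},\le)$, hence of cardinality $\ge\cof(\w^{\upa\w})=\mathfrak d$; since $|\mathcal B|=\chi(\U)<\mathfrak d$ and $\mathfrak q(\U)\cdot\chi(\U)=\max\{\mathfrak q(\U),\chi(\U)\}$ (and likewise for $\mathfrak d(\U)$), the cofinal (resp.\ coinitial) set is then forced to have size $\ge\mathfrak d$.

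For $\mathfrak d(\U)\ge\mathfrak d$, let $\mathcal G\subset\w^{\upa\w}$ be cofinal in $(\w^{\upa\w},\le_\U)$ and put $\mathcal H=\{\,g\circ e_B:g\in\mathcal G,\ B\in\mathcal B\,\}\subset\w^{\upa\w}$. Since the strictly increasing functions are cofinal in $(\w^{\upa\w},\le)$, it suffices to dominate each of them by a member of $\mathcal H$. Given such an $h$, choose $g\in\mathcal G$ with $h\le_\U g$ and then $B\in\mathcal B$ with $B\subset\{n:h(n)\le g(n)\}$; for every $k$ we get $g(e_B(k))\ge h(e_B(k))\ge h(k)$, using $e_B(k)\in B$ and $e_B(k)\ge k$. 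Hence $\mathcal H$ is cofinal in $(\w^{\upa\w},\le)$, so $\mathfrak d\le|\mathcal H|\le|\mathcal G|\cdot\chi(\U)$, and therefore $|\mathcal G|\ge\mathfrak d$, i.e.\ $\mathfrak d(\U)=\mathfrak d$.

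The coinitial direction is the main point. Let $\mathcal Q\subset\w^{\upa\w}$ be coinitial in $(\w^{\upa\w},\le_\U)$. Replacing each $q$ by $n\mapsto\min_{m\ge n}q(m)$, which again lies in $\w^{\upa\w}$, is non-decreasing and is pointwise (hence $\le_\U$-) below $q$, we may assume every $q\in\mathcal Q$ is non-decreasing. For $q\in\mathcal Q$ and $B\in\mathcal B$ let $D_{q,B}(k)$ be the least element of $B$ strictly greater than $\max(\{0\}\cup\{m\in B:q(m)\le k\})$; this defines a non-decreasing finite-to-one function $D_{q,B}\in\w^{\upa\w}$. I claim $\{\,D_{q,B}:q\in\mathcal Q,\ B\in\mathcal B\,\}$ is cofinal in $(\w^{\upa\w},\le)$. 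Given a strictly increasing $g\in\w^{\upa\w}$, let $f\in\w^{\upa\w}$ be the non-decreasing function $f(m)=\max(\{0\}\cup\{j:g(j)\le m\})$; an elementary computation gives $\max\{n:f(n)\le k\}=g(k+1)-1\ge g(k)$. Choose $q\in\mathcal Q$ with $q\le_\U f$ and $B\in\mathcal B$ with $B\subset\{n:q(n)\le f(n)\}$. Writing $N_k=\max\{n:f(n)\le k\}$, for all large $k$ every $m\in B$ with $m\le N_k$ satisfies $f(m)\le k$ (as $f$ is non-decreasing), hence $q(m)\le f(m)\le k$; thus $\max(\{0\}\cup\{m\in B:q(m)\le k\})$ is at least the largest element of $B$ that is $\le N_k$, and therefore the next element of $B$ — which is $D_{q,B}(k)$ — exceeds $N_k\ge g(k)$. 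So $D_{q,B}(k)\ge g(k)$ for all but finitely many $k$, which is enough since $\cof$ is insensitive to finite modifications; the claim follows, and $\mathfrak d\le|\mathcal Q|\cdot\chi(\U)$ gives $\mathfrak q(\U)\ge\mathfrak d$, i.e.\ $\mathfrak q(\U)=\mathfrak d$.

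The step I expect to cause trouble is precisely the definition of $D_{q,B}$. The naive candidate, ``$q$ inverted along $B$'', namely $k\mapsto\max\{m\in B:q(m)\le k\}$, is useless: its values are trapped inside $B$ and can fall arbitrarily far short of $N_k$ whenever the set $B\in\U$ is sparse. Passing to the \emph{successor} of that value inside $B$ is exactly what crosses the gap of $B$ in a single jump, which is why $D_{q,B}(k)$ ends up strictly above $N_k$. The other delicate point — the reduction to non-decreasing representatives — is what legitimizes the inclusion ``$\{m\in B:m\le N_k\}\subset\{m\in B:q(m)\le k\}$'', since for non-decreasing $q$ and $f$ the preimages of initial segments are initial segments. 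Apart from these two observations the argument is a direct unwinding of the definitions of $\le_\U$, $\cof$ and $\coin$, along the lines of Theorem~16 of \cite{Bla86}.
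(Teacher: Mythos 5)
Your argument is correct. The paper does not actually prove Lemma~\ref{l2} — it only points to Theorem 16 of Blass's \emph{Near coherence of filters, I} — and your proof is exactly the standard argument behind that citation: compose a $\le_\U$-cofinal (resp.\ coinitial) family with the increasing enumerations of a base of $\U$ of size $\chi(\U)<\mathfrak d$ to manufacture a $\le$-dominating family, forcing the original family to have size $\ge\mathfrak d$; all the details (the computation of $N_k$, the ``successor in $B$'' trick for $D_{q,B}$, and the trivial case split for the ``consequently'' clause) check out. The only cosmetic remark is that the reduction to non-decreasing $q$ is not actually used: your inclusion $\{m\in B:m\le N_k\}\subset\{m\in B:q(m)\le k\}$ only needs $f$ non-decreasing and $B\subset\{n:q(n)\le f(n)\}$.
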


We shall need to generalize the definition of a ball $B_\e(x)$ to allow the radius to take a function value. Namely, for a function $f:X\to[0,\infty)$ defined on a metric space $X$, a point $x\in X$ and a subset $A\subset X$, let $B(x,f)=\{y\in X:d(y,x)\le f(x)\}=B_{f(x)}(x)$ and
$$B(A,f)=\bigcup_{a\in A}B(a,f).$$ The set $B(A,f)$ is called the {\em $f$-neighborhood} of $A$ in $X$.
Sometimes for a real number $\e\ge0$ we shall use the notation $B(x,\e)$ instead of $B_\e(x)$ identifying $\e$ with the constant function $\e:X\to\{\e\}\subset[0,\infty)$.

For a set $A\subset X$ and a function $f:X\to[0,\infty)$, the $f$-neighborhood
$B(A,f)\subset X$ determines the closed-and-open set $\bar B(A,f)=\{p\in X^\sharp:B(A,f)\in p\}$ in the compact Hausdorff space $X^\sharp\subset\beta X$ and the closed subset $\check B(A,f)=\{\check p:p\in\bar B(A,f)\}$ in the corona $\check X$ of $X$.

We shall use the following description of the topology $\check X$, mentioned in \cite{Prot11}.

\begin{lemma}\label{l3} For each ultrafilter $p\in X^\sharp$ the family $$\{\check B(P,f):P\in p,\;f\in \w^{\upa X}\}$$ is a base of closed neighborhoods of $\check p$ in $\check X$.
\end{lemma}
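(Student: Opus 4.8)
The plan is to show two things: first, that each set $\check B(P,f)$ with $P\in p$ and $f\in\w^{\upa X}$ is genuinely a closed neighborhood of $\check p$; and second, that every open neighborhood of $\check p$ in $\check X$ contains one of these sets. Throughout I will work with the description of $\check X$ via slowly oscillating functions $f\colon X\to[0,1]$ and their Stone--\v Cech extensions $\beta f\colon\beta X_d\to[0,1]$, so that a basic closed neighborhood of $\check p$ can be taken of the form $\{\check q : q\in X^\sharp,\ \beta g(q)\in[0,\e]\}$ for a slowly oscillating $g$ with $\beta g(p)=0$ and $\e>0$, using that $\check X$ is the quotient of $X^\sharp$ by the closed equivalence relation generated by $\parallel$ and that slowly oscillating functions separate the classes.

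For the first part, I would fix $P\in p$ and $f\in\w^{\upa X}$ and check that $\bar B(P,f)$ is clopen in $X^\sharp$ (which is clear since $B(P,f)\subset X$ and clopen sets of $X^\sharp\subset\beta X_d$ are exactly the traces of subsets of $X$), and that $\bar B(P,f)$ is $\parallel$-saturated up to a neighborhood: if $q\parallel q'$ with witnessing $\e$ and $B(P,f)\in q$, then since $f$ is bounded-to-bounded, $f$ is bounded below by some positive constant off a bounded set, so $B_\e$ of $B(P,f)$ is absorbed into $B(P,f')$ for a slightly larger $f'\in\w^{\upa X}$; iterating and passing to the closed saturation shows $\check B(P,f)$ is a neighborhood of $\check B(P,f')\ni\check p$. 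The containment $\check p\in\check B(P,f)$ is immediate from $P\in p$ and $f\ge 0$, giving $B(P,f)\supset P\in p$. This is the step I expect to require the most care: matching the $\e$-enlargements coming from the parallel relation against $f$-enlargements coming from bounded-to-bounded functions, and verifying that the resulting family is closed under the operations needed to conclude ``neighborhood'' rather than merely ``contains the point.''

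For the second part, let $U$ be an open neighborhood of $\check p$ in $\check X$. Then there is a slowly oscillating $g\colon X\to[0,1]$ with $\beta g(p)=0$ and an $\e>0$ such that $\{\check q:\beta g(q)\le\e\}\subset U$. Put $A=g^{-1}([0,\e/2))$; then $A\in p$, and I claim $\check B(A,f)\subset\{\check q:\beta g(q)\le\e\}$ for a suitable $f\in\w^{\upa X}$. Indeed, by slow oscillation, for the chosen $\e$ there is, for each $\delta<\infty$, a bounded set on whose complement $g$ varies by at most $\e/2$ on $\delta$-balls; choosing $f(x)$ to grow slowly enough that $B_{f(x)}(x)$ stays within such a region (formally, defining $f$ by a diagonal argument against a sequence of bounded sets witnessing slow oscillation for $\delta=1,2,3,\dots$) guarantees that $y\in B(A,f)$ forces $\beta g$ to be at most $\e$ on any ultrafilter containing $B(A,f)$. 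Hence $\check B(A,f)\subset U$ with $A\in p$ and $f\in\w^{\upa X}$, which is exactly what is needed. Finally, I would note that finite intersections $\check B(P_1,f_1)\cap\dots\cap\check B(P_n,f_n)\supset\check B(P_1\cap\dots\cap P_n,\min_i f_i)$ stay in the family, so it is a genuine neighborhood base, not merely a network.
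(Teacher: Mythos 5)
A preliminary remark: the paper itself gives no proof of Lemma~\ref{l3} (it is quoted from \cite{Prot11}), so your argument has to stand on its own. Your second half --- every neighbourhood of $\check p$ contains some $\check B(A,f)$ --- is essentially sound: by Protasov's Proposition~1 and compactness the topology of $\check X$ is the initial topology induced by the maps $\check g$ for slowly oscillating $g:X\to[0,1]$, the reduction to a single $g$ with $\beta g(p)=0$ is legitimate, and the diagonal construction of $f$ from the bounded sets witnessing slow oscillation for $\delta=1,2,\dots$ does force $g\big(B(A,f)\big)\subset[0,\e]$ outside a bounded set, which suffices because ultrafilters in $X^\sharp$ avoid bounded sets. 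The closing observation about finite intersections is also correct and needed.

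The genuine gap is in the first half, where you must show that $\check B(P,f)$ \emph{is} a neighbourhood of $\check p$. Your argument controls only one step of the parallel relation, and even that not uniformly: if $q\parallel q'$ with witness $\e$ and $B(P,f)\in q$, then $B(P,f+\lceil\e\rceil)\in q'$, but $\e$ depends on the pair $(q,q')$, so the one-step $\parallel$-saturation of $\bar B(P,f)$ is only contained in the increasing union $\bigcup_{n}\bar B(P,f+n)$, not in any single $\bar B(P,f')$. Worse, $\sim$ is the smallest \emph{closed} equivalence relation containing $\parallel$, obtained by transfinitely alternating transitive closure with topological closure, so ``iterating and passing to the closed saturation'' is exactly the step that needs an argument and is not supplied; and the conclusion ``$\check B(P,f)$ is a neighbourhood of $\check B(P,f')$'' with $f'\ge f$ points in the wrong direction (you need an open set squeezed \emph{between} $\check p$ and $\check B(P,f)$). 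The repair is to use in this half the same tool you already use in the other: take a $1$-Lipschitz bounded-to-bounded minorant $f'\le f$ (for instance $f'(x)=\lfloor\min_{y\in X}(f(y)+d(x,y))\rfloor$), check that $u(x)=\min\{1,\,d(x,P)/f'(x)\}$ is slowly oscillating with $u|P=0$ and $u^{-1}([0,\tfrac12))\subset B(P,f')$, and conclude that $\{\check q:\beta u(q)<\tfrac12\}$ is an open neighbourhood of $\check p$ contained in $\check B(P,f')\subset\check B(P,f)$. As written, the first half does not establish the neighbourhood property, so the proposal is incomplete at its hardest point.
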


This lemma implies an easy criterion for recognizing ultrafilters $p,q\in X^\sharp$ with different images $\check p$, $\check q$. We say that two subsets $P,Q$ of a metric space $(X,d)$ are {\em asymptotically disjoint} if for each real number $\e>0$ the intersection $B(P,\e)\cap B(Q,\e)$ is bounded in $X$. This is equivalent to the existence of a bounded-to-bounded function $f\in\w^{\upa X}$ such that the intersection $B(P,f)\cap B(Q,f)$ is bounded.

The following fact was proved by I.V.Protasov in Lemma 4.2 of \cite{Prot03}.

\begin{lemma}\label{l4} For an unbounded metric space $X$ two ultrafilters $p,q\in X^\sharp$ have distinct images $\check p\ne\check q$ in the corona $\check X$ if and only if there are two asymptotically disjoint sets $P,Q\subset X$ such that $P\in p$ and $Q\in q$.
\end{lemma}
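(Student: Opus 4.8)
The plan is to use the closed-neighborhood base from Lemma \ref{l3} to reduce the statement to a purely combinatorial separation property of the sets in $p$ and $q$. First I would prove the easy implication: if $P\in p$ and $Q\in q$ are asymptotically disjoint, pick $f\in\w^{\upa X}$ with $B(P,f)\cap B(Q,f)$ bounded. Then $\check B(P,f)$ and $\check B(Q,f)$ are neighborhoods of $\check p$ and $\check q$ in $\check X$ by Lemma \ref{l3}. An ultrafilter $r\in X^\sharp$ lying in both $\bar B(P,f)$ and $\bar B(Q,f)$ would contain $B(P,f)\cap B(Q,f)$, a bounded set, contradicting $r\in X^\sharp$; hence $\bar B(P,f)\cap\bar B(Q,f)=\emptyset$, so $\check B(P,f)\cap\check B(Q,f)=\emptyset$ as well (any point of the intersection would lift to an ultrafilter in $\bar B(P,f)$ and, since these sets are clopen and $\sim$-saturated up to the quotient, also meeting $\bar B(Q,f)$ — this saturation point needs a short argument, see below). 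Thus $\check p$ and $\check q$ have disjoint neighborhoods and in particular $\check p\ne\check q$.

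For the nontrivial direction, suppose $\check p\ne\check q$. Since $\check X$ is Hausdorff (being a quotient of the compact Hausdorff $X^\sharp$ by a closed equivalence relation), there are disjoint open neighborhoods of $\check p$ and $\check q$, and by Lemma \ref{l3} we may shrink them to basic closed neighborhoods $\check B(P,f)\ni\check p$ and $\check B(Q,g)\ni\check q$ with $\check B(P,f)\cap\check B(Q,g)=\emptyset$. Replacing $f$ and $g$ by $\min\{f,g\}$, which still lies in $\w^{\upa X}$, we may assume $f=g$. Pulling back to $X^\sharp$, the clopen sets $\bar B(P,f)$ and $\bar B(Q,f)$ are then disjoint (a common ultrafilter would project into the intersection of the coronal neighborhoods). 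Since $\bar B(P,f)=\{r\in X^\sharp:B(P,f)\in r\}$ and similarly for $Q$, disjointness means no ultrafilter extending $\F_0$ contains both $B(P,f)$ and $B(Q,f)$; equivalently $B(P,f)\cap B(Q,f)$ is bounded in $X$. This exhibits $P\in p$ and $Q\in q$ with $B(P,f)\cap B(Q,f)$ bounded, which is precisely asymptotic disjointness of $P$ and $Q$ via the witness $f$.

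The main obstacle is the passage between the corona $\check X$ and the cover space $X^\sharp$, specifically verifying that $\check B(A,f)\cap\check B(B,f)=\emptyset$ is equivalent to $\bar B(A,f)\cap\bar B(B,f)=\emptyset$. The forward direction is trivial (the quotient map $q\colon X^\sharp\to\check X$ sends $\bar B$ into $\check B$). For the reverse, one uses that $\bar B(A,f)$ is clopen and hence, being the preimage under $q$ of nothing canonical, we must instead argue: if $r\in\bar B(A,f)$ and $r'\in\bar B(B,f)$ had $\check r=\check r'$, then in particular $r$ and $r'$ are connected by a chain of parallel ultrafilters; but $r\parallel r'$ with parameter $\e$ forces $B_\e(A')\in r'$ for $A'\in r$ — and by choosing $f\le\e$ along a suitable set one can track the clopen set through the chain. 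A cleaner route, which I expect to be the one intended, is simply to invoke Lemma \ref{l3} in both directions: $\check B(P,f)$ being a \emph{neighborhood} of every $\check r$ with $r\in\bar B(P,f)$ is built into the statement, so disjointness of the coronal neighborhoods directly gives that no $\check r$ simultaneously has $B(P,f)$ and $B(Q,f)$ in (some ultrafilter representing) it, and unwinding the definition of $\sim$-classes yields boundedness of $B(P,f)\cap B(Q,f)$. Once this bridge is in place the rest is bookkeeping.
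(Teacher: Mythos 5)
Your overall reduction is the same as the paper's: both directions hinge on translating separation in $\check X$ into boundedness of $B(P,f)\cap B(Q,f)$ via Lemma~\ref{l3}, and your ``only if'' direction (from $\check p\ne\check q$ to asymptotic disjointness) is correct and essentially identical to the paper's. There the passage between $\check X$ and $X^\sharp$ goes the harmless way: a common ultrafilter $r\in\bar B(P,f)\cap\bar B(Q,f)$ maps to a common point of the two coronal neighborhoods, so disjointness upstairs follows from disjointness downstairs, and an unbounded $B(P,f)\cap B(Q,f)$ would produce such an $r$.

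The gap is in the ``if'' direction, at exactly the step you flag: deducing $\check B(P,f)\cap\check B(Q,f)=\emptyset$ from $\bar B(P,f)\cap\bar B(Q,f)=\emptyset$. Neither of your two justifications works. The first rests on the premise that $\check r=\check r'$ forces $r$ and $r'$ to be joined by a chain of parallel ultrafilters; but $\parallel$ is already transitive (compose the parameters), so chains collapse to a single instance of $\parallel$, and $\sim$ is the smallest \emph{closed} equivalence relation containing $\parallel$ --- taking the closure can identify pairs that are not $\parallel$-related at all, so there is in general no chain through which to track the clopen set. Note also that $\bar B(P,f)$ is not $\sim$-saturated: $r\parallel r'$ only pushes $B(P,f)\in r$ to $B_\e(B(P,f))\in r'$. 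Your second, ``cleaner route'' is circular: it invokes disjointness of the coronal neighborhoods, which is the statement being proved. What is actually needed is the nontrivial fact that asymptotically disjoint sets have disjoint images in $\check X$; this is the content of Protasov's Lemma~4.2, which the paper is quoting and likewise does not reprove. The standard argument produces, from asymptotic disjointness of $P$ and $Q$, a slowly oscillating function $h:X\to[0,1]$ equal to $1$ on $P$ and to $0$ on $Q$ outside a bounded set, and then uses the characterization of $\sim$ by slowly oscillating functions (Proposition~1 of \cite{Prot05}, quoted in the Introduction) to get $\beta h(p)=1\ne 0=\beta h(q)$ and hence $\check p\ne\check q$. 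Without some such input, the ``if'' direction does not close.
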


\begin{proof} If $\check p\ne\check q$, then we can choose two disjoint neighborhoods $O(\check p)$ and $O(\check q)$ of the points $\check p$, $\check q$ in the corona $\check X$. By Lemma~\ref{l3}, we can assume that these neighborhoods are of the form $O(\check p)=\check B(P,f)$, $O(\check q)=\check B(Q,f)$ for some sets $P\in p$, $Q\in q$ and some bounded-to-bounded function $f\in\w^{\upa X}$. To see that the sets $P,Q$ are asymptotically disjoint, it suffices to check that the intersection $B(P,f)\cap B(Q,f)$ is bounded. Assuming the opposite, we could find an ultrafilter $r\in X^\sharp$ containing $B(P,f)\cap B(Q,f)$. Then $\check r\in \check B(P,f)\cap\check B(Q,f)=O(\check p)\cap O(\check q)$, which is not possible as the sets $O(\check p)$ and $O(\check q)$ are disjoint.
This proves the ``only if'' part of the lemma.

To prove the ``if'' part, assume that two ultrafilters $p,q\in X^\sharp$ contain asymptotically disjoint sets $P\in p$, $Q\in q$. Choose a bounded-to-bounded function $f\in\w^{\upa X}$ such that $B(P,f)\cap B(Q,f)$ is bounded. Then $\check B(P,f)$ and $\check B(Q,f)$ are two disjoint neighborhoods of the points $\check p$ and $\check q$, which implies that $\check p\ne\check q$.
\end{proof}

A subset $A$ of a metric space $X$ is called {\em asymptotically isolated} if $A$ is asymptotically disjoint from its complement $X\setminus A$. This happens if and only if $B(A,f)=A$ for some bounded-to-bounded function $f\in\w^{\upa X}$. For a subset $A\subset X$ let $\check A=\{\check p:A\in p\in X^\sharp\}$.

\begin{lemma}\label{l5a} A subset $\U\subset \check X$ is closed-and-open in the corona $\check X$ if and only if $\U=\check U$ for some asymptotically isolated subset $U\subset X$.
\end{lemma}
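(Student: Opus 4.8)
The plan is to prove the two implications separately; throughout, let $\pi\colon X^\sharp\to\check X$, $\pi(p)=\check p$, denote the quotient map. For the ``if'' direction, suppose $U\subset X$ is asymptotically isolated, i.e.\ asymptotically disjoint from $X\setminus U$. Since asymptotic disjointness is visibly symmetric, $X\setminus U$ is asymptotically isolated as well. Fixing $f\in\w^{\upa X}$ with $B(U,f)=U$, one gets $\check B(U,f)=\check U$, so $\check U$ is closed in $\check X$ by the remark preceding Lemma~\ref{l3}, and likewise $\check{X\setminus U}$ is closed. These two closed sets cover $\check X$, because every $p\in X^\sharp$ contains $U$ or $X\setminus U$, and they are disjoint: a common point $\check p$ would give $q,r\in X^\sharp$ with $\check q=\check p=\check r$, $U\in q$ and $X\setminus U\in r$, which is impossible by Lemma~\ref{l4} since $U$ and $X\setminus U$ are asymptotically disjoint. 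Hence $\check X=\check U\sqcup\check{X\setminus U}$ is a partition into closed sets, so $\check U$ is closed-and-open.

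For the ``only if'' direction, let $\U\subset\check X$ be closed-and-open. Then $\pi^{-1}(\U)$ is closed-and-open in $X^\sharp$, which is a closed, hence compact, subspace of the Boolean space $\beta X_d$. Since the basic clopen sets of $\beta X_d$ are the sets $\bar A$ ($A\subset X$) and $\bar{A_1}\cup\dots\cup\bar{A_n}=\overline{A_1\cup\dots\cup A_n}$, covering the compact set $\pi^{-1}(\U)$ by finitely many of them yields $A\subset X$ with $\pi^{-1}(\U)=\{p\in X^\sharp:A\in p\}$. As a $\pi$-preimage this set is $\sim$-saturated, and in particular $\parallel$-saturated: if $A\in p$ and $p\parallel q$ then $A\in q$. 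The heart of the proof is the claim that any such $A$ must be asymptotically isolated; granting it, $\U=\pi(\pi^{-1}(\U))=\check A$, which is exactly what we want.

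To prove the claim I would argue by contraposition. If $A$ is not asymptotically isolated, choose $\e>0$ with $Y:=B(A,\e)\cap B(X\setminus A,\e)$ unbounded, and for each $y\in Y$ pick $a(y)\in A$ and $b(y)\in X\setminus A$ with $d(y,a(y))\le2\e$ and $d(y,b(y))\le2\e$, so $d(a(y),b(y))\le4\e$. Take $q_0\in X^\sharp$ with $Y\in q_0$ and let $p=\{S\subset X:a^{-1}(S)\in q_0\}$ and $q=\{S\subset X:b^{-1}(S)\in q_0\}$ be the images of $q_0$ under $a$ and $b$. Then $A\in p$ and $A\notin q$ (since $a^{-1}(A)=b^{-1}(X\setminus A)=Y\in q_0$); both $p$ and $q$ lie in $X^\sharp$, because for a bounded set $D$ the preimages $a^{-1}(D)$ and $b^{-1}(D)$ are contained in the bounded set $B(D,2\e)$, hence not in $q_0$, so $D\notin p$ and $D\notin q$; and $p\parallel q$ with witnessing constant $4\e$, since for $P\in p$ we have $a^{-1}(P)\in q_0$ and $b(a^{-1}(P))\subset B_{4\e}(P)$, so $B_{4\e}(P)\in q$, and symmetrically. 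This contradicts the $\parallel$-saturation of $\{p\in X^\sharp:A\in p\}$, and proves the claim.

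I expect the main obstacle to be precisely this last step: manufacturing, out of a set $A$ that fails to be asymptotically isolated, a pair of parallel ultrafilters that $A$ separates. The remaining ingredients --- the description of the clopen subsets of the compact zero-dimensional space $X^\sharp$, and the ``if'' direction via Lemma~\ref{l4} --- are routine.
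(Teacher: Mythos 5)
Your proof is correct, but the ``only if'' direction follows a genuinely different route from the paper's. The paper never passes through the Boolean-algebra description of the clopen sets of $X^\sharp$: it covers the compact set $\U$ by finitely many sets $\check B(P_p,f_p)$ with $\check B(P_p,3f_p)\subset\U$, puts $U=\bigcup_p B(P_p,f_p)$ and $f=\min_p f_p$, checks $\check B(U,f)=\check U=\U$, concludes that $B(U,f)\setminus U$ is bounded, and finally redefines $f$ to be $0$ on a suitable bounded set so that $B(U,f_0)=U$ exactly. You instead observe that $\pi^{-1}(\U)$ is a clopen subset of the compact zero-dimensional space $X^\sharp$, hence of the form $\bar A\cap X^\sharp$, and that this set is $\sim$-saturated and in particular $\parallel$-saturated; the heart of your argument is then the purely combinatorial fact that a set $A$ which is \emph{not} asymptotically isolated admits parallel ultrafilters $p,q$ (pushforwards of a single $q_0$ concentrated on the unbounded set $B(A,\e)\cap B(X\setminus A,\e)$ along selections $a,b$) with $A\in p\setminus q$ --- and that construction is sound: $p,q\in X^\sharp$ because bounded sets pull back into bounded sets, and $4\e$ witnesses $p\parallel q$. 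Your approach exploits the quotient structure of $\check X$ directly and yields the cleaner byproduct that the clopen algebra of $\check X$ is exactly the algebra of traces $\bar A\cap X^\sharp$ with $A$ asymptotically isolated; the paper's approach is more constructive, staying entirely inside its $\check B(P,f)$-neighborhood machinery and producing the isolating function $f_0$ explicitly. Your ``if'' direction (complementation: $\check U$ and $\check{X\setminus U}$ are disjoint closed sets covering $\check X$, disjointness via Lemma~\ref{l4}) also differs mildly from the paper's, which shows openness of $\check U$ directly from Lemma~\ref{l3}; both are fine.
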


\begin{proof} Assume that $\U=\check U$ for some asymptotically isolated subset $U\subset X$. Then $B(U,f)=U$ for some bounded-to-bounded function $f\in \w^{\upa X}$. It follows from Lemma~\ref{l3} that for each ultrafilter $p\in X^\sharp$ with $\check p\in \check U$ the set $\check B(U,f)=\check U$ is a neighborhood of $\check p$, which means that $\check U=\U$ is open in $\check X$. The set $\check U=\U$ is closed being a continuous image of the compact subset $\bar U=\{p\in X^\sharp:U\in p\}$.
\smallskip

Now assume that a subset $\U\subset \check X$ is closed-and-open in $\check X$. Fix any point $x_0$ in the metric space $X$. Since the set $\U$ is open in $\check X$, for each ultrafilter $p\in X^\sharp$ with $\check p\in\U$, there is a set $P_p\in p$ and a bounded-to-bounded function $f_p\in\w^{\upa X}$ such that $\check B(P_p,3f_p)\subset \U$. Replacing $f_p$ by a smaller function, if necessary, we can assume that
$B(B(x,f_p),f_p)\subset B(x,3f_p)$ and $f_p(x)\le\frac12d(x,x_0)$ for each point $x\in X$.

By the compactness of $\U$, the cover $\{\check B(P_p,f_p):p\in X^\sharp,\;\;\check p\in\U\}$ has a finite subcover $\{\check B(P_p,f_p):p\in F\}$ where $F\subset X^\sharp$ is a finite set. Now consider the set $U=\bigcup_{p\in F}B(P_p,f_p)$ and observe that $\check U=\bigcup_{p\in F}\check B(P_p,f_p)=\U$. Let $f=\min\{f_p:p\in F\}$ and observe that
$$\check B(U,f)=\bigcup_{p\in F}\bigcup_{x\in P_p}B(B(x,f_p),f)\subset \bigcup_{p\in F}\bigcup_{x\in P_p}B(x,3f_p)=\bigcup_{p\in F}B(P_p,3f_p)$$ and hence $$\U=\check U\subset \check B(U,f)\subset\bigcup_{p\in F}\check B(P_p,3f_p)\subset\U.$$ The equality $\check U=\check B(U,f)$ implies that the set $B(U,f)\setminus U$ is bounded. It follows from $f(x)\le \frac12d(x,x_0)$, $x\in X$, that the set $D=\{x\in X: B(x,f)\cap (B(U,f)\setminus U)\ne \emptyset\}$ is bounded in $X$.
Now define a bounded-to-bounded function $f_0\in\w^{\upa X}$ letting $f_0|D\equiv 0$ and $f_0|X\setminus D=f|X\setminus D$.

We claim that $B(U,f_0)=U$. Assuming the opposite, find a point $x\in B(U,f_0)\setminus U$ and a point $u\in U$ with $x\in B(u,f_0)$. The definition of the set $D$ guarantees that $u\in D$ and hence $f_0(u)=0$ and $x=u\in U$, which is a contradiction.
The equality $U=B(U,f_0)$ witnesses that the set $U$ with $\check U=\U$ is asymptotically isolated.
\end{proof}

Balls $B(x,f)$ with function radius $f\in\w^{\upa X}$ can be used
 to prove the following characterization of coarse maps in spirit of uniform continuity.

\begin{lemma} A bounded-to-bounded function $f:X\to Y$ between metric spaces is coarse if and only if

$\forall \e\in\w^{\upa Y}\;\exists\delta\in\w^{\upa X}\;\forall x\in X\;\;f(B(x,\delta))\subset B(f(x),\e)$.
\end{lemma}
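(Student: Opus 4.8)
The plan is to prove both implications directly, using throughout that $f$ is bounded--to--bounded; I would first record the elementary consequence that $f^{-1}(C)$ is bounded in $X$ whenever $C\subset Y$ is bounded, since $f(f^{-1}(C))\subset C$ is bounded and hence $f^{-1}(C)$ is bounded by the ``$\Leftarrow$'' half of the bounded--to--bounded property.

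For the ``only if'' part, assume $f$ is coarse and fix a basepoint $x_0\in X$ (the statement is vacuous if $X=\emptyset$). Using the definition of coarseness I would choose, for each $n\in\w$, an integer $g(n)\in\w$ with $d_X(x,x')\le n\Rightarrow d_Y(f(x),f(x'))\le g(n)$; replacing $g$ by $n\mapsto\max_{k\le n}\max\{g(k),n\}$ I may assume $g$ is nondecreasing, $g(0)=0$ and $g(n)\ge n$, so $g(n)\to\infty$. Then I would introduce the ``inverse'' $\bar g(k)=\max\{n\in\w:g(n)\le k\}$, which is well defined, nondecreasing and satisfies $g(\bar g(k))\le k$, and for a given $\e\in\w^{\upa Y}$ put $\delta:=\bar g\circ\e\circ f$. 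The inclusion $f(B(x,\delta))\subset B(f(x),\e)$ is then immediate: if $d_X(x,x')\le\delta(x)$ then $d_Y(f(x),f(x'))\le g(\delta(x))=g(\bar g(\e(f(x))))\le\e(f(x))$. The remaining point is that $\delta\in\w^{\upa X}$: images of bounded sets are finite because $f$ and $\e$ are bounded--to--bounded, while $\delta^{-1}(\{0,\dots,N\})=f^{-1}\big(\e^{-1}(\{0,\dots,g(N+1)-1\})\big)$ (using $\bar g(k)\le N\iff g(N+1)>k$), which is bounded since $\e^{-1}$ of a finite set is bounded in $Y$ and, as noted, $f^{-1}$ of a bounded set is bounded in $X$.

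For the ``if'' part I would argue by contradiction. If $f$ is not coarse, fix $\delta<\infty$ and pairs $(a_k,b_k)$ in $X$ with $d_X(a_k,b_k)\le\delta$ and $s_k:=d_Y(f(a_k),f(b_k))\to\infty$. First $\{a_k\}$ is unbounded, for otherwise $\{a_k\}\cup\{b_k\}$ would be bounded and hence have bounded $f$-image, contradicting $s_k\to\infty$. Passing to successive subsequences I would arrange, for a basepoint $x_0\in X$ and $y_0:=f(x_0)$, that $d_X(a_k,x_0)\to\infty$, that $r_k:=d_Y(f(a_k),y_0)$ is strictly increasing (possible since $\{f(a_k)\}$ is then the image of an unbounded set, hence unbounded), and that $s_k>k$ for every $k$. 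Then I would take the counting function $h(t)=|\{j:r_j\le t\}|$ on $[0,\infty)$, which is nondecreasing, finite--valued, diverges to $\infty$ and satisfies $h(r_k)=k$, and define the test function $\e_0(y)=h(d_Y(y,y_0))$. One checks $\e_0\in\w^{\upa Y}$: bounded sets of $Y$ have bounded distance to $y_0$ and hence finite $h$-image, and $\e_0^{-1}(\{0,\dots,N\})\subset B_{r_{N+1}}(y_0)$ is bounded. Applying the hypothesis to $\e_0$ produces $\delta_0\in\w^{\upa X}$ with $f(B(x,\delta_0))\subset B(f(x),\e_0)$ for all $x$; since $\delta_0^{-1}(\{n\in\w:n<\delta\})$ is bounded and $d_X(a_k,x_0)\to\infty$, for all large $k$ we get $\delta_0(a_k)\ge\delta\ge d_X(a_k,b_k)$, so $b_k\in B(a_k,\delta_0)$ and therefore $s_k\le\e_0(f(a_k))=h(r_k)=k$, contradicting $s_k>k$.

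The ``only if'' direction is routine bookkeeping; the hard part will be the ``if'' direction, and within it the construction of $\e_0$. Any member of $\w^{\upa Y}$ must tend to infinity along the escaping sequence $f(a_k)$, so $\e_0(f(a_k))$ cannot be kept bounded; the trick is to make it grow strictly more slowly than the alleged bad distances $s_k$, which is exactly what the subsequence choice $s_k>k$ together with the counting function $h$ achieves. The one thing to be careful about throughout is keeping the two halves of the bounded--to--bounded condition straight and checking that all the subsequence extractions can be carried out simultaneously.
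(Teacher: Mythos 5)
Your proof is correct. The ``only if'' half is essentially the paper's argument: you invert the coarseness modulus (your $g$ and $\bar g$ play the roles of the paper's $\xi$ and $\zeta$) and set $\delta=\bar g\circ\e\circ f$; the only addition is your explicit verification that $\delta\in\w^{\upa X}$, which the paper asserts without proof and which is worth writing out. The ``if'' half is where you genuinely diverge. The paper fixes a \emph{single} $\e\in\w^{\upa Y}$, takes the corresponding $\delta$, and tries to extract a uniform bound $R$ for $\diam f(B_r(x))$ by splitting $X$ into the bounded set $\Delta=\delta^{-1}([0,r))$ and its complement; on the complement it only obtains $d_Y(f(x'),f(x))\le\e(f(x))$, which is not uniformly bounded, so that sketch does not close as written --- and no argument using one arbitrary $\e$ can: the map $f(x)=x^2$ on $\IR$ is bounded-to-bounded and not coarse, yet satisfies the displayed inclusion for the single choice $\e(y)=\lceil |y|\rceil$ with $\delta(x)=\lfloor |x|/4\rfloor$. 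Your contradiction argument, which manufactures a test function $\e_0$ growing slowly along the escaping sequence $f(a_k)$ via the counting function $h$, is exactly the place where the universal quantifier over $\e$ is used, and it is the argument that actually proves the implication; this is a genuine improvement over the paper's sketch. One small point to tighten: when you pass to the subsequence making $r_k=d_Y(f(a_k),y_0)$ strictly increasing, you should also arrange $r_k\to\infty$ (say $r_k\ge k$), which the unboundedness of $\{f(a_k)\}$ permits; strict monotonicity alone does not guarantee that $h$ is finite-valued.
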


\begin{proof} To prove the ``only if'' part, assume that the bounded-to-bounded function $f:X\to Y$ is coarse. In this case there is an increasing function $\xi:\w\to\w$ such that for any $n\in\w$ and points $x,x'\in X$ with $d_X(x,x')\le n$ we get $d_Y(f(x),f(x'))\le \xi(n)$. Consider the bounded-to-bounded function $\zeta:\w\to\w$, $\zeta:m\mapsto\max\{n\in\w:\xi(n)\le m\}$ and observe that $\xi\circ\zeta(m)\le m$ for each $m\in\w$.

Given any bounded-to-bounded function $\e\in\w^{\upa Y}$, consider the bounded-to-bounded function  $\delta:X\to \w$, $\delta(x)=\zeta\circ\e\circ f(x)$, and observe that it has the required property: $f(B(x,\delta)\subset B(f(x),\e)$ for all $x\in X$.

To prove the ``if'' part, choose any bounded-to-bounded function $\e\in{\upa X}$ and assume that there exists $\delta\in\w^{\upa X}$ such that $f(B(x,\delta))\subset B(f(x),\e)$ for all $x\in X$.
To show that $f$ is coarse, for each real number $r$ we need to find a real number $R$ such that $f(B_r(x))\subset B(f(x),R)$.  Since the function $\delta:X\to\w$ is bounded-to-bounded, the set
$\Delta=\delta^{-1}([0,r))$ is bounded in $X$ and so is its $r$-neighborhood
$B_r(\Delta)=\bigcup_{x\in\Delta}B(x,r)$. Since the functions $f$ and $\e$ are bounded-to-bounded, the set $f(B_r(\Delta))$ is bounded in $Y$ and $\e\circ f(B_r(\Delta))$ is bounded in $\w$.
It can be shown that the number
$$R=\max\big\{\e(r),\diam\big(\e\circ f(B_r(\Delta))\big)\big\}$$has the required property: $f(B_r(x))\subset B_R(f(x))$ for each $x\in X$.
\end{proof}

A function $\phi:X\to Y$ between two metric spaces is called {\em boundedly oscillating} if there is a real number $D$ such that for any real number $\e$ there is a bounded set $B\subset X$ such that for each point $x\in X\setminus B$ the set $\phi(B_\e(x))$ has diameter $\diam\, \phi(B_\e(x))\le D$.
It is clear that each slowly oscillating function is boundedly oscillating.

The following characterization of boundedly oscillating functions easily follows from the definition.

\begin{lemma}\label{l:bo} A function $\phi:X\to Y$ between metric spaces is boundedly oscillating if and only if there is a bounded-to-bounded function $\e\in\w^{\upa X}$ such that $\sup_{x\in X}\diam \,\phi(B(x,\e))<\infty$.
\end{lemma}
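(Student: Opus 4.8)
The plan is to verify both implications directly from the definitions; the ``if'' part is a one-line observation, while the ``only if'' part needs a short construction of a radius function. Assume first that there is $\e\in\w^{\upa X}$ with $D:=\sup_{x\in X}\diam\phi(B(x,\e))<\infty$. Then $\phi$ is boundedly oscillating with this very constant $D$: given a real number $\delta>0$, the set $B:=\e^{-1}\big([0,\delta]\big)=\e^{-1}\big(\{0,1,\dots,\lfloor\delta\rfloor\}\big)$ is the preimage of a finite (hence bounded) subset of $\w$ under the bounded-to-bounded function $\e$, so $B$ is bounded in $X$; and for every $x\in X\setminus B$ we have $\e(x)>\delta$, hence $B_\delta(x)\subset B(x,\e)$ and $\diam\phi(B_\delta(x))\le\diam\phi(B(x,\e))\le D$.

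For the converse, assume $\phi$ is boundedly oscillating and fix a witnessing constant $D$: for every $n\in\w$ there is a bounded set $C_n\subset X$ with $\diam\phi(B_n(x))\le D$ for all $x\in X\setminus C_n$. Replacing each $C_n$ by $\bigcup_{k\le n}C_k$ we may assume the sequence $(C_n)_{n\in\w}$ is increasing. Fix a base point $x_0\in X$, put $g(x)=\lceil d(x,x_0)\rceil$ (a bounded-to-bounded function), and let $\nu(x)=\min\{n\in\w:x\in C_n\}\in\w\cup\{\infty\}$, with $\min\emptyset=\infty$. Finally define
$$\e(x)=\min\big(g(x),\ \max(0,\nu(x)-1)\big)\in\w,\qquad\text{using the convention }\infty-1=\infty.$$
By construction $\e\le g$, and for each $x$ either $\e(x)=0$ or $\e(x)<\nu(x)$, and in the latter case $x\notin C_{\e(x)}$ since the $C_n$ are increasing.

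It then remains to check two things. First, $\e\in\w^{\upa X}$: since $\e\le g$, every bounded set has bounded $\e$-image, while for each $m\in\w$ one has $\e^{-1}\big([0,m]\big)\subset g^{-1}\big([0,m]\big)\cup C_{m+1}$, a union of two bounded sets, hence bounded. Second, $\sup_{x\in X}\diam\phi(B(x,\e))\le D$: if $\e(x)=0$ then $B(x,\e)=\{x\}$ and the image has diameter $0$; otherwise $x\notin C_{\e(x)}$ and the oscillation hypothesis applied with $n=\e(x)$ gives $\diam\phi(B(x,\e))=\diam\phi(B_{\e(x)}(x))\le D$. This completes the argument.

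The main obstacle is the converse implication, and the point requiring care is that the bounded sets $C_n$ witnessing bounded oscillation need not exhaust $X$ — they may all be empty, for instance when $\phi$ is constant — so the level function $\nu$ can equal $\infty$ on an unbounded set and cannot by itself serve as a legitimate (bounded-to-bounded) radius function. Truncating $\nu$ by the fixed bounded-to-bounded function $g$ is precisely what repairs this: it forces $\e$ to be bounded-to-bounded while keeping every ball $B(x,\e)$ small enough for the bounded-oscillation estimate to apply.
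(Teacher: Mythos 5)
Your proof is correct, and since the paper only asserts that this lemma ``easily follows from the definition,'' your argument is precisely the natural filling-in of that omitted verification: the ``if'' direction via boundedness of $\e^{-1}([0,\delta])$, and the ``only if'' direction by building a radius function from the witnessing bounded sets $C_n$. Your closing remark correctly identifies the one point of care (the $C_n$ need not exhaust $X$, so $\nu$ alone is not bounded-to-bounded), and truncating by $g(x)=\lceil d(x,x_0)\rceil$ handles it.
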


Using Lemma~\ref{l:bo} it is quite easy to construct boundedly oscillating functions $f:X\to\w$ with values in $\w$.

\begin{lemma} For each metric space $X$ there is a boundedly oscillating bounded-to-bounded function $\phi:X\to\w$.
\end{lemma}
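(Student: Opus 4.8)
The plan is to build such a $\phi$ explicitly from the distance to a fixed basepoint, rescaled logarithmically, so that a bounded \emph{multiplicative} distortion of distances produces only a bounded \emph{additive} distortion of $\phi$. Fix a point $x_0\in X$, put $\rho(x)=d(x,x_0)$, and define $\phi:X\to\w$ by $\phi(x)=\lfloor\log_2(1+\rho(x))\rfloor$ (so that the level sets of $\phi$ are the consecutive dyadic annuli around $x_0$). First I would check that $\phi\in\w^{\upa X}$: for any $B\subset X$ we have $B$ bounded $\Leftrightarrow$ $\sup_{x\in B}\rho(x)<\infty$, and since $t\mapsto\lfloor\log_2(1+t)\rfloor$ is nondecreasing and proper on $[0,\infty)$, this holds iff $\phi(B)$ is a finite (equivalently, bounded) subset of $\w$. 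Hence $\phi$ is bounded-to-bounded.

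Next I would verify that $\phi$ is boundedly oscillating by producing, as required by Lemma~\ref{l:bo}, a function $\e\in\w^{\upa X}$ with $\sup_{x\in X}\diam\,\phi(B(x,\e))<\infty$. Take $\e(x)=\lfloor\tfrac14\rho(x)\rfloor+1$; this is again bounded-to-bounded, since $B$ is bounded iff $\rho$ is bounded on $B$ iff $\e(B)$ is finite. For $y\in B(x,\e)$ we have $|\rho(y)-\rho(x)|\le\e(x)\le\tfrac14\rho(x)+1$, so an elementary estimate shows that the ratio $\tfrac{1+\rho(y)}{1+\rho(x)}$ stays inside a fixed compact subinterval of $(0,\infty)$ (with the bounded range of small values of $\rho(x)$ inspected separately). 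Passing to $\log_2$ and then taking floors, we conclude that $|\phi(y)-\phi(x)|\le D$ for some absolute constant $D$, whence $\sup_{x\in X}\diam\,\phi(B(x,\e))\le D<\infty$. By Lemma~\ref{l:bo} the function $\phi$ is boundedly oscillating, and combined with the previous paragraph this proves the lemma.

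The only delicate point, and it is minor, is the calibration of the radius $\e$: it must grow to infinity along every unbounded sequence (so that $\e$ is finite-to-one, hence bounded-to-bounded), yet it must stay below a fixed proper fraction of $\rho(x)$ so that an $\e$-ball about $x$ cannot reach back toward $x_0$ into a far lower level set of $\phi$; the logarithmic scaling is precisely what turns the resulting bounded multiplicative spread of distances into the bounded additive spread of $\phi$ demanded by Lemma~\ref{l:bo}. (When $X$ is bounded the lemma is trivial, any constant function being boundedly oscillating and bounded-to-bounded; the construction above also covers this degenerate case.)
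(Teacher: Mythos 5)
Your proof is correct and follows essentially the same idea as the paper: the level sets of your $\phi$ are dyadic annuli about $x_0$, which is a concrete instance of the paper's construction of $\phi$ via annuli $B_{r_{n+1}}(x_0)\setminus B_{r_n}(x_0)$ with widths $r_{n+1}-r_n\to\infty$. Your explicit verification via Lemma~\ref{l:bo} with $\e(x)=\lfloor\rho(x)/4\rfloor+1$ is sound (the ratio $(1+\rho(y))/(1+\rho(x))$ indeed stays in a fixed interval such as $[1/5,2]$), and merely supplies details the paper leaves to the reader.
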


\begin{proof} Fix any point $x_0\in X$ and choose an increasing sequence of real numbers $(r_n)_{n\in\w}$ such that $r_{0}<0$ and $\lim_{n\to\infty}r_{n+1}-r_n=\infty$. Then the function $\phi:X\to\w$ defined by $\phi^{-1}(n)=B_{r_{n+1}}(x_0)\setminus B_{r_n}(x_0)$ for $n\in\w$ is boundedly oscillating and bounded-to-bounded.
\end{proof}

\begin{lemma}\label{l:bobf} For any boundedly oscillating bounded-to-bounded function $\phi:X\to\w$ on an unbounded metric space there is a bounded-to-bounded function $\tilde \e\in\w^{\upa \w}$ such that $\sup_{x\in X}\diam\, \phi(B(x,\tilde\e\circ \phi))<\infty$.
\end{lemma}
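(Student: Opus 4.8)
The plan is to extract from Lemma~\ref{l:bo} a witness $\e\in\w^{\upa X}$ to the bounded oscillation of $\phi$ and then ``push it down'' through $\phi$ to a function of the variable $n=\phi(x)$.

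First I would use Lemma~\ref{l:bo}: since $\phi$ is boundedly oscillating, there is a bounded-to-bounded function $\e\in\w^{\upa X}$ with $D:=\sup_{x\in X}\diam\,\phi(B(x,\e))<\infty$. Next I define $\tilde\e:\w\to\w$ by
$$\tilde\e(n)=\min\{\e(x):x\in\phi^{-1}(n)\}\ \text{ if }\phi^{-1}(n)\ne\emptyset,\qquad \tilde\e(n)=n\ \text{ otherwise};$$
the minimum exists because $\phi^{-1}(n)$ is a nonempty set and $\e$ takes values in the well-ordered set $\w$.

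The key step is to check that $\tilde\e$ is bounded-to-bounded, i.e. finite-to-one. For $K\in\w$ one has the identity
$$\{n\in\phi(X):\tilde\e(n)\le K\}=\phi\big(\e^{-1}([0,K])\big),$$
since $\tilde\e(n)\le K$ exactly when some $x$ with $\phi(x)=n$ satisfies $\e(x)\le K$. The right-hand side is finite: the set $\e^{-1}([0,K])$ is bounded in $X$ because $\e$ is bounded-to-bounded, and its $\phi$-image is then bounded, hence finite, in $\w$ because $\phi$ is bounded-to-bounded. As $\tilde\e$ is moreover injective on $\w\setminus\phi(X)$, it follows that $\tilde\e^{-1}(k)$ is finite for every $k$, so $\tilde\e\in\w^{\upa\w}$ (and then automatically $\tilde\e\circ\phi\in\w^{\upa X}$).

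Finally, for the oscillation estimate: given $x\in X$, put $n=\phi(x)$; then $\tilde\e\circ\phi(x)=\tilde\e(n)\le\e(x)$, hence $B(x,\tilde\e\circ\phi)=B_{\tilde\e(n)}(x)\subseteq B_{\e(x)}(x)=B(x,\e)$, and therefore $\diam\,\phi(B(x,\tilde\e\circ\phi))\le\diam\,\phi(B(x,\e))\le D$. Taking the supremum over $x\in X$ gives $\sup_{x\in X}\diam\,\phi(B(x,\tilde\e\circ\phi))\le D<\infty$, as required. I do not expect a serious obstacle; the only slightly delicate point is the finite-to-one verification of $\tilde\e$, which uses both bounded-to-bounded hypotheses (on $\e$ and on $\phi$) in tandem.
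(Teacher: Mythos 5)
Your proof is correct and follows essentially the same route as the paper: both extract $\e\in\w^{\upa X}$ from Lemma~\ref{l:bo} and push it down to $\tilde\e$ by taking minima of $\e$ over $\phi$-preimages, so that $\tilde\e\circ\phi\le\e$ and the diameter bound is inherited. The only differences are cosmetic — the paper minimizes over the tails $\phi^{-1}([n,\infty))$ (making $\tilde\e$ non-decreasing and the empty-fiber issue moot), while you minimize over single fibers and, commendably, spell out the finite-to-one verification that the paper leaves implicit.
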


\begin{proof} By Lemma~\ref{l:bo}, there is a bounded-to-bounded function $\e\in\w^{\upa X}$ such that
$$ D=\sup_{x\in X}\diam\, \phi(B(x,\e))<\infty.$$ Since the map $\phi:X\to\w$ is bounded-to-bounded, there is a bounded-to-bounded function $\tilde \e\in\w^{\upa\w}$ such that $\tilde\e\circ\phi\le\e$. Such function $\tilde \e$ can be defined by the formula
$$\tilde \e(n)=\min\e(\phi^{-1}([n,\infty))\mbox{ \ for $n\in\w$}.$$
The inequality $\tilde\e\circ\phi\le\e$ implies
$$\sup_{x\in X}\diam\, \phi(B(x,\tilde\e\circ \phi))\le \sup_{x\in X}\diam\, \phi(B(x,\e)) <\infty.$$
\end{proof}

Observe that for a bounded-to-bounded function $\phi:X\to\w$ defined on an unbounded metric space $X$ and an ultrafilter $p\in X^\sharp$ its image $\beta \phi(p)=\{A\subset \w:\phi^{-1}(A)\in p\}$ lies in the set $\w^\sharp=\w^*\subset\beta\w$. To shorten notations, we shall denote the image $\beta \phi(p)$ of the ultrafilter $p$ by $\phi(p)$.

\section{Dimension of the corona}

By \cite{Dra}, for each proper metric space $X$ of finite asymptotic dimension $\asdim(X)$ the corona $\check X$ has topological dimension $\dim(\check X)=\asdim(X)$. However it is not known if the asymptotic dimension $\asdim(X)$ is finite provided that the topological dimension $\dim(\check X)$ of the corona $\check X$ is finite (cf. \cite[\S5]{BD}).
In this section we give an affirmative answer to this problem for metric spaces $X$ with zero-dimensional corona. We shall apply a characterization of asymptotic dimension zero in terms of $\e$-chains.

Let $\e\ge 0$ be a real number. By an {\em $\e$-chain} in a metric space $(X,d)$ we understand any sequence of points $x_0,\dots,x_n$ of $X$ such that $d(x_{i-1},x_i)\le \e$ for all positive $i\le n$.
For a point $x\in X$ its {\em $\e$-component} $C_\e(x)$ is the set of all points $y\in X$, which can be linked with $x$ by an $\e$-chain $x=x_0,x_1,\dots,x_n=y$.

\begin{theorem}\label{t:dim} For an unbounded metric space $X$ the following conditions are equivalent:
\begin{enumerate}
\item $X$ has asymptotic dimension zero;
\item $\sup_{x\in X}\diam C_\e(x)<\infty$ for each $\e<\infty$;
\item the corona $\check X$ has topological dimension zero.
\end{enumerate}
\end{theorem}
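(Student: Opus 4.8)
The plan is to prove the cycle of implications $(1)\Rightarrow(2)\Rightarrow(3)\Rightarrow(1)$. The implication $(1)\Rightarrow(2)$ is the routine direction: assuming $\asdim(X)\le 0$, fix $\e<\infty$ and apply the definition of asymptotic dimension to obtain a cover $\U$ with $\sup_{U\in\U}\diam U=:D<\infty$ such that each $\e$-ball meets at most one member of $\U$. I would then argue that each $\e$-component $C_\e(x)$ is contained in a single member of $\U$: if $y\in C_\e(x)$, walk along an $\e$-chain $x=x_0,\dots,x_n=y$; since consecutive points lie within distance $\e$, the ball $B_\e(x_{i-1})$ contains $x_i$, so $x_{i-1}$ and $x_i$ lie in a common member of $\U$, and one pushes this along the chain (using that $\U$ is a \emph{partition}-like cover in the sense that each point determines its member uniquely, which follows from the "meets at most one" condition). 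Hence $\diam C_\e(x)\le D$ for all $x$, giving $(2)$.

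For $(2)\Rightarrow(3)$: assume $\sup_{x}\diam C_\e(x)<\infty$ for every $\e<\infty$. To show $\dim\check X=0$ it suffices, by Lemma~\ref{l3}, to show that every basic closed neighborhood $\check B(P,f)$ of a point $\check p$ contains a closed-and-open neighborhood of $\check p$; equivalently, by Lemma~\ref{l5a}, that inside any $\check B(P,f)$ we can find an asymptotically isolated set still belonging (as $\bar U$) to $p$. The idea is to replace the ball $B(P,f)$ by the union of all $\e$-components it meets, for a suitable constant $\e$ dominating $f$ on the relevant region: more carefully, pick a real $\e$ with $f\le\e$ on $P$ shrunk appropriately, set $U=\bigcup\{C_\e(x):x\in P'\}$ for a cofinite-in-$p$ subset $P'\subseteq P$, and note that $U$ is a union of $\e$-components, hence $B_\e(U)=U$, so $U$ is asymptotically isolated. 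Since $\diam C_\e(x)\le D$ uniformly, $U\subseteq B(P',D)$, which lies in $\check B(P,f)$ after adjusting constants; and $U\in p$ since $P'\subseteq U$. Thus $\check p$ has arbitrarily small clopen neighborhoods, so $\check X$ is zero-dimensional at every point, i.e. $\dim\check X=0$.

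For $(3)\Rightarrow(1)$, the contrapositive: suppose $\asdim(X)>0$, so there is $\e<\infty$ with $\sup_{x\in X}\diam C_\e(x)=\infty$ (using the equivalence of the failure of $(1)$ with the failure of $(2)$, which I would establish as part of $(1)\Leftrightarrow(2)$ — so really I prove $(1)\Leftrightarrow(2)$ and then $(2)\Leftrightarrow(3)$). Then one builds, inside a single $\e$-component of unbounded diameter or among a sequence of $\e$-components of diameters tending to infinity, two subsets $P,Q$ that are \emph{not} asymptotically disjoint but whose "$\e$-closures" interleave, producing ultrafilters $p\ni P$, $q\ni Q$ with $\check p=\check q$ that nonetheless cannot be separated by clopen sets — concretely, pick points $x_n$ with $\diam C_\e(x_n)\to\infty$, inside each $C_\e(x_n)$ choose a long $\e$-chain, and split the chain into alternating blocks $P_n,Q_n$; set $P=\bigcup P_n$, $Q=\bigcup Q_n$. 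Any clopen set $\check U$ with $U$ asymptotically isolated satisfies $B_\e(U)=U$ for some $\e'$; if $\e'\ge\e$ then $U$ cannot split a single long $\e$-chain into a cobounded-from-$P$ and cobounded-from-$Q$ piece, so $U$ fails to separate $p$ from $q$. This shows the point $\check p$ (or the pair $\check p,\check q$) has no clopen neighborhood base, hence $\dim\check X>0$.

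The main obstacle I expect is the bookkeeping in $(2)\Rightarrow(3)$ and $(3)\Rightarrow(1)$ around \emph{function-valued} radii versus \emph{constant} radii: the neighborhoods in Lemma~\ref{l3} use $f\in\w^{\upa X}$, while the $\e$-component hypothesis $(2)$ is phrased with constants, and asymptotic isolatedness (Lemma~\ref{l5a}) again uses a function $f$. Passing between these requires care — e.g. in $(2)\Rightarrow(3)$ one must choose the constant $\e$ large enough to absorb $f$ on the unbounded part of $P$ while keeping the resulting $D$-neighborhood inside $\check B(P,f)$, which may force restricting $P$ to a cobounded subset and invoking that bounded sets are invisible in the corona. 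Conversely the construction in $(3)\Rightarrow(1)$ must ensure the built sets $P,Q$ genuinely witness $\check p=\check q$ (non-asymptotic-disjointness) while defeating \emph{every} asymptotically isolated $U$, i.e. every constant $\e'$, which is why one needs $\e$-components of \emph{unbounded} diameter rather than a single fixed one.
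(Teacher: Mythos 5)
Your overall architecture matches the paper's ($\e$-components, the clopen sets of Lemma~\ref{l5a}, chains crossing the boundary of a clopen set), and your $(1)\Leftrightarrow(2)$ is fine, but there are two genuine gaps. In $(2)\Rightarrow(3)$ the step ``$U$ is a union of $\e$-components, hence $B_\e(U)=U$, so $U$ is asymptotically isolated'' is false for a \emph{constant} $\e$: asymptotic isolation requires $B(U,g)=U$ for a bounded-to-bounded (hence unbounded) function $g$, equivalently that $B_\delta(U)\cap B_\delta(X\setminus U)$ be bounded for \emph{every} $\delta$, whereas a union of $\e$-components only controls the single scale $\e$. For instance, $X=\{2^n:n\in\w\}\cup\{2^n+1:n\in\w\}\subset\IR$ satisfies (2), and $U=\{2^n:n\in\w\}$ is a union of $\tfrac12$-components but is not asymptotically isolated. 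This is exactly the constant-versus-function issue you flag as ``the main obstacle,'' and your sketch does not overcome it. The repair is to let the scale grow with the point: setting $\gamma(\e)=\sup_x\diam C_\e(x)$, take $\e(x)$ essentially maximal with $\gamma(\e(x))\le f(x)$ (so $\e(x)\to\infty$ because $f$ is bounded-to-bounded), put $\tilde P=\bigcup_{x\in P}C_{\e(x)}(x)\subset B(P,f)$, and verify $B(\tilde P,\tilde f)=\tilde P$ for a bounded-to-bounded $\tilde f$ dominated by $\delta(x)=\inf\{\e(y):x\in C_{\e(y)}(y)\}$; showing $\delta$ is bounded-to-bounded needs a preliminary normalization $f(x)\le\tfrac12 d(x,x_0)$.

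In $(3)\Rightarrow(1)$ your logic is inverted. You propose sets $P,Q$ that are \emph{not} asymptotically disjoint, with $\check p=\check q$; but then there is no pair of distinct points for clopen sets to fail to separate, so nothing about $\dim\check X$ follows. What is needed is the opposite configuration: points $x_n,y_n$ lying in a common \emph{bounded} $\e$-connected set $C_n$ with $d(x_n,y_n)>n$, where the $C_n$ are mutually far apart ($\dist(C_n,C_{<n})\ge n$). Then $P=\{x_n\}$ and $Q=\{y_n\}$ \emph{are} asymptotically disjoint, so $\check p\ne\check q$ by Lemma~\ref{l4}, yet any two disjoint clopen neighborhoods $\check U\ni\check p$, $\check V\ni\check q$ coming from asymptotically isolated $U,V$ with $B(U,f)=U$, $f>\e$ off a bounded set, are defeated by walking the $\e$-chain from $x_m\in U$ to $y_m\in V$: the chain can never leave $U$. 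Producing such $C_n$ is where the real work lies and requires an inductive extraction distinguishing whether the large $\e$-components occur near or far from what has already been built; in particular a single $\e$-component of infinite diameter must be handled by cutting off a bounded initial segment of a long chain. Your ``alternating blocks'' device does not supply the asymptotic disjointness that makes $\check p$ and $\check q$ distinct in the first place.
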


\begin{proof}
$(1)\Ra(2)$. Assume that $X$ has asymptotic dimension zero. Then for each $\e<\infty$ there is a cover $\U$ of $X$ such that $\sup_{U\in\U}\diam(U)<\infty$ and each $\e$-ball $B_\e(x)$, $x\in X$, meets a unique set $U\in\U$. Then for each point $x\in X$ its $\e$-component $C_\e(x)$ lies in a unique set $U\in\U$, which implies that $$\sup_{x\in X}\diam C_\e(x)\le \sup_{U\in\U}\diam(U)<\infty.$$
\smallskip

The implication $(2)\Ra(1)$ trivially follows from the fact that for each $\e<\infty$, $\U=\{C_\e(x):x\in X\}$ is a disjoint cover of $X$ such that each $\e$-ball $B_\e(x)$, $x\in X$, meets a unique set $U\in\U$ (which is equal to $C_\e(x)$).
\smallskip

$(2)\Ra(3)$ Assume that for each $\e\ge 0$ the number $\gamma(\e)=\sup_{x\in X}\diam C_\e(x)$ is finite. Since the space $X$ is unbounded, the function $\gamma:[0,\infty)\to[0,\infty)$ is bounded-to-bounded.

To show that the corona $\check X$ of $X$ has topological dimension zero, fix any ultrafilter $p\in X^\sharp$ and a neighborhood $U\subset \check X$ of its equivalence class $\check p$. By Lemma~\ref{l3}, we can assume that $U$ is of the form $U=\check B(P,f)$ where $P\in p$ and $f:X\to \w$ is a bounded-to-bounded function.

Fix any point $x_0\in X$ and put $\|x\|=d(x,x_0)$ for any point $x\in X$. Replacing $f$ by a smaller function, if necessary, we can assume that $f(x)\le \frac12\|x\|$. This condition guarantees that for any point $x\in X$ and $y\in B(x,f)$ we get
$$\|y\|=d(y,x_0)\le d(y,x)+d(x,x_0)\le f(x)+d(x,x_0)\le \frac12\|x\|+\|x\|=\frac32\|x\|$$and
$$\|x\|=d(x,x_0)\le d(x,y)+d(y,x_0)\le f(x)+\|y\|\le \frac12\|x\|+\|y\|,$$ which implies
$\frac12\|x\|\le \|y\|$. Consequently,
\begin{equation}\label{eq:ostap1}
\frac23\|y\|\le\|x\|\le 2\|y\|\mbox{ \ for any points $x\in X$ and $y\in B(x,f)$}.
\end{equation}

Consider the bounded-to-bounded function $\e:X\to[0,\infty)$ defined by
$$\e(x)=\frac12\sup\{\e\ge0:\gamma(\e)\le f(x)\}\mbox{ \ for \ }x\in X,$$
and observe that $C_{\e(x)}(x)\subset B(x,f(x))$ for all $x\in X$. Using the inequalities (\ref{eq:ostap1}), one can check that the function $$\delta:X\to [0,\infty),\;\;\;\delta:x\mapsto
\inf \{\e(y):x\in C_{\e(y)}(y)\},$$
is bounded-to-bounded. So, we can choose a bounded-to-bounded function $\tilde f:X\to\w$ such that $\tilde f(x)\le\delta(x)$ for all $x\in X$.

The choice of the function $\e$ guarantees that the set $\tilde P=\bigcup_{x\in P}C_{\e(x)}(x)$ belongs to the ultrafilter $p$ and lies in the $f$-neighborhood $B(P,f)$ of the set $P$. Moreover, $B(\tilde P,\tilde f)=\tilde P$. Indeed, for each point $x\in \tilde P$ we can find a point $y\in P$ with $x\in C_{\e(y)}(y)$. Then definition of the function $\delta$ guarantees that $\tilde f(x)\le\delta(x)\le \e(y)$, which implies that $B(x,\tilde f)\subset C_{\e(y)}(y)\subset \tilde P$. So, $B(\tilde P,\tilde f)=\tilde P$, which implies that $\check B(\tilde P,\tilde f)\subset\check B(P,f)$ is a closed-and-open neighborhood of $\check p$ in $\check X$.
\smallskip

$(3)\Ra(2)$ To derive a contradiction, assume that $\dim(\check X)=0$ but there is $\e<\infty$ such that $\sup_{x\in X}\diam C_\e(x)=\infty$. For two subsets $A,B\subset X$ put $\dist(A,B)=\inf\{d(a,b):a\in A,\; b\in B\}$.
Fix any point $\theta\in X$.

\begin{claim}\label{cl3.2} There is a sequence $(C_n)_{n\in\w}$ of bounded $\e$-connected subsets of $X$ such that
$\diam C_n>n$ and $\dist(C_n,C_{<n})\ge n$ where $C_{<n}=B_n(\theta)\cup \bigcup_{k<n}C_k$.
\end{claim}

\begin{proof} The sets $C_n$, $n\in\w$, will be constructed by induction. Assume that for some number $n\in\w$ bounded $\e$-connected sets $C_0,\dots,C_{n-1}$ have been constructed. Consider the bounded set $C_{<n}=B_n(\theta)\cup\bigcup_{k<n}C_k$ and its $n$-neighborhood $B=B_n(C_{<n})=\bigcup_{c\in C_{<n}}B_n(c)$.

Now we consider two cases.

(i) $D=\sup_{x\in B}\diam C_\e(x)<\infty$. Since $\sup_{x\in X}C_\e(x)=\infty$, we can choose a point $x\in X$ such that $\diam C_\e(x)>2\max\{n,D\}$. It follows that $x\notin B$ and moreover, $C_\e(x)\cap B=\emptyset$ (in the opposite case, for a point $y\in B\cap C_\e(x)$, its $\e$-connected component $C_\e(y)=C_\e(x)$ has diameter $\diam C_\e(y)>2D\ge D$, which contradicts the definition of $D$). So, $C_\e(x)\cap B=\emptyset$.

Since $\diam C_\e(x)>2n$, we can choose a point $y\in C_\e(x)$ such that $d(y,x)>n$. By the definition of the set $C_\e(x)$, the points $x,y\in C_\e(x)$ can be linked by an $\e$-chain $x=x_0,\dots,x_m=y$. Then $C_n=\{x_0,\dots,x_m\}$ is a required bounded $\e$-connected subset of $X$ that has diameter $\diam C_n\ge d(x,y)>n$ and $$\dist(C_n,C_{<n})\ge \dist(C_\e(x),C_{<n})\ge \dist(X\setminus B,C_{<n})\ge n.$$

(ii) The second case happens when $\sup_{x\in B}\diam C_\e(x)=\infty$. In this case we can choose a point $y\in B$ such that $\diam C_\e(y)>2(\diam(B)+n+\e)$. Then there is a point $x\in C_\e(y)$ with $d(x,y)>\diam(B)+n+\e$, which can be linked with $y$ by an $\e$-chain  $x=x_0,\dots,x_m=y$. Since $d(x_0,x_m)=d(x,y)>\diam(B)+n+\e$, we can choose the smallest number $k\le m$ such that $d(x_0,x_k)>n$.
Then $d(x_0,x_i)\le n$ for every $i<k$ and hence $$d(x_i,B)\ge d(x_i,y)-\diam(B)\ge d(x_0,y)-d(x_0,x_i)-\diam(B)>\diam(B)+n+\e-n-\diam(B)=\e.$$
Also $d(x_k,B)\ge d(x_{k-1},B)-d(x_{k-1},x_k)>\e-\e=0$. Consequently, the bounded $\e$-connected set $C_n=\{x_0,\dots,x_k\}$ has diameter $\diam(C_n)\ge d(x_0,x_k)>n$ and is disjoint with the set $B=B_n(C_{<n})$, which implies that $\dist(C_n,C_{<n})\ge n$. This completes the inductive construction.
\end{proof}

Claim~\ref{cl3.2} yields a sequence $(C_n)_{n\in\w}$ of $\e$-connected sets such that $\diam (C_n)>n$ and $\dist(C_n,C_{<n})\ge n$ for each $n\in\w$. For every $n\in\w$ choose two points $x_n,y_n\in C_n$ on distance $d(x_n,y_n)>n$. The choice of the sets $C_n\subset X\setminus B_n(\theta)$, $n>0$, implies that the sequences $\vec x=(x_n)_{n\in\w}$ and $\vec y=(y_n)_{n\in\w}$ tend to infinity and the sets $P=\{x_n\}_{n\in\w}$ and $Q=\{y_n\}_{n\in\w}$ are unbounded and asymptotically disjoint.

The sequences $\vec x$ and $\vec y$ can be thought as functions $\vec x:\w\to X$ and $\vec y:\w\to Y$ and so have the Stone-\v Cech extensions $\beta \vec x:\beta\w\to\beta X_d$ and $\beta\vec y:\beta\w\to\beta X_d$. Since the sequences $\vec x$ and $\vec y$ tend to infinity, $\beta\vec x(\w^*)\cup\beta\vec y(\w^*)\subset X^\sharp$. Take any free ultrafilter $\F\in\w^*$ and consider its images $p=\beta\vec x(\F)\in X^\sharp$ and $q=\beta\vec y(\F)\in X^\sharp$. Since the sets $\vec x(\w)\in p$ and $\vec y(\w)\in q$ are asymptotically disjoint, $\check p\ne\check q$ according to Lemma~\ref{l4}.

Since the space $\check X$ has topological dimension zero, there are disjoint open-and-closed sets  $\U,\V\subset \check X$ such that $\check p\in\U$ and $\check q\in\V$. By Lemma~\ref{l5a} there are  asymptotically isolated sets $U,V\subset X$ such that $\U=\check U$ and $\V=\check V$.
Since $U,V$ are asymptotically isolated in $X$, there is a bounded-to-bounded function $f\in\w^{\upa X}$ such that $B(U,f)=U$ and $B(V,f)=V$.

It follows from $\check U\cap \check V=\U\cap\V=\emptyset$ that the intersection $U\cap V$ is bounded.
Choose $n\in\w$ so large that
\begin{itemize}
\item the $n$-ball $B_n(\theta)$ contains the bounded set $U\cap V$, and
\item $f(x)>\e$ for each $x\in X\setminus B_n(\theta)$.
\end{itemize}

It follows from $\check p\in \U=\check U$ and $\check q\in\V=\check V$ that $U\in p=\beta\vec x(\F)$ and $V\in q=\vec y(\F)$. Consider the (infinite) set $F=\vec  x^{-1}(U\setminus B_n(\theta))\cap\vec y^{-1}(V\setminus B_n(\theta))\in\F$. Choose any number $m\in F$ with $m>n$ and consider the $\e$-connected set $C_m$. By Claim~\ref{cl3.2}, $C_m\cap B_n(\theta)\subset C_m\cap B_m(\theta)=\emptyset$. Choose an $\e$-chain $x_m=z_0,\dots,z_k=y_m$ linking the points $x_m$ in $y_m$ of the set $C_m$.
Observe that $z_0=x_m\in U\setminus B_n(\theta)$ and $z_k=y_m\in V\setminus B_n(\theta)\subset X\setminus U$. So, the largest number $l\le k$ such that $z_l\in U$ is not equal to $k$.
It follows from $z_l\in C_m\subset X\setminus B_m(\theta)\subset X\setminus B_n(\theta)$ and the choice of the number $n$ that $f(z_l)>\e$.

Then $z_{l+1}\in B_\e(z_l)\subset B_{f(z_l)}(z_l)=B(z_l,f)\subset B(U,f)=U$, which contradicts the definition of $l$.
\end{proof}

\section{Evaluating the character of a point in the corona}

In this section, for an unbounded metric space $(X,d)$ and an ultrafilter $p\in X^\sharp$ we shall evaluate the character $\chi(\check p,\check X)$ of the point $\check p$ in the corona $\check X$ of $X$.

First we derive an upper bound on $\chi(\check p,\check X)$ from Lemmas~\ref{l1} and \ref{l3}.

\begin{lemma}\label{l5} For each ultrafilter $p\in X^\sharp$ the point $\check p\in\check X$ has character
$$\chi(\check p,\check X)\le \max\{\chi(p,X^\sharp),\mathfrak d\}.$$
\end{lemma}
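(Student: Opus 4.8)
The plan is to construct a neighborhood base at $\check p$ in $\check X$ of cardinality at most $\max\{\chi(p,X^\sharp),\mathfrak d\}$ directly from Lemma~\ref{l3}, which tells us that $\{\check B(P,f):P\in p,\ f\in\w^{\upa X}\}$ is such a base, but of the (possibly too large) cardinality $\chi(p,X^\sharp)\cdot|\w^{\upa X}|$. The point is that we do not need all sets $P\in p$ nor all radius functions $f\in\w^{\upa X}$: it suffices to use a filter base $\mathcal P\subset p$ witnessing $\chi(p,X^\sharp)$ together with a \emph{coinitial} family $\mathcal E\subset\w^{\upa X}$ in the partial order $(\w^{\upa X},\le)$, which by Lemma~\ref{l1} can be taken of cardinality $\le\mathfrak d$.

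First I would fix a family $\mathcal P\subset p$ with $|\mathcal P|=\chi(p,X^\sharp)$ which forms a base of neighborhoods of $p$ in $X^\sharp$; concretely, $\{\bar P:P\in\mathcal P\}$ is a neighborhood base of $p$ in $X^\sharp$, so in particular $\mathcal P$ is coinitial in $(p,\subset)$. Next, using Lemma~\ref{l1}, fix a family $\mathcal E\subset\w^{\upa X}$ that is coinitial in $(\w^{\upa X},\le)$ and has $|\mathcal E|\le\mathfrak d$. I claim that $\mathcal B=\{\check B(P,f):P\in\mathcal P,\ f\in\mathcal E\}$ is a base of closed neighborhoods of $\check p$ in $\check X$; since $|\mathcal B|\le|\mathcal P|\cdot|\mathcal E|\le\max\{\chi(p,X^\sharp),\mathfrak d\}$, this yields the desired bound.

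To prove the claim, take any neighborhood $U$ of $\check p$ in $\check X$. By Lemma~\ref{l3} there are $Q\in p$ and $g\in\w^{\upa X}$ with $\check B(Q,g)\subset U$. Since $\mathcal P$ is coinitial in $(p,\subset)$, choose $P\in\mathcal P$ with $P\subset Q$, and since $\mathcal E$ is coinitial in $(\w^{\upa X},\le)$, choose $f\in\mathcal E$ with $f\le g$. Then $B(P,f)\subset B(Q,g)$, hence $\check B(P,f)\subset\check B(Q,g)\subset U$, and $\check B(P,f)\in\mathcal B$ is a neighborhood of $\check p$ (again by Lemma~\ref{l3}, since $P\in p$). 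Thus every neighborhood of $\check p$ contains a member of $\mathcal B$, so $\mathcal B$ is a neighborhood base, completing the proof.

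The only point requiring a little care — and the place I expect the argument to be most delicate — is the monotonicity $f\le g\Rightarrow\check B(P,f)\subset\check B(Q,g)$ when $P\subset Q$: one needs $B(P,f)\subset B(Q,g)$ as subsets of $X$ (immediate from the pointwise inequality $f\le g$ and $P\subset Q$), and then that the operation $A\mapsto\check B(A,\cdot)=\{\check q:B(A,\cdot)\in q\}$ is monotone with respect to inclusion of the underlying sets, which is clear since $B(P,f)\subset B(Q,g)$ implies every ultrafilter containing the former contains the latter. Everything else is bookkeeping with coinitial families.
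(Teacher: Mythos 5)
Your overall strategy is exactly the paper's: pair a small family of sets from $p$ with a coinitial family in $(\w^{\upa X},\le)$ of cardinality $\le\mathfrak d$ supplied by Lemma~\ref{l1}, and feed both into the base description of Lemma~\ref{l3}. There is, however, one genuine gap, and it sits at the step you pass over rather than at the monotonicity step you single out as delicate (that step really is immediate). You assert that if $\{\bar P:P\in\mathcal P\}$ is a neighborhood base of $p$ in $X^\sharp$, then ``in particular $\mathcal P$ is coinitial in $(p,\subset)$.'' This inference is false. Given $Q\in p$, a neighborhood base only provides $P\in\mathcal P$ with $\bar P\subset\bar Q$ inside $X^\sharp$, and this inclusion of clopen sets says merely that no ultrafilter of $X^\sharp$ contains $P\setminus Q$, i.e.\ that $P\setminus Q$ is \emph{bounded}; it does not give the literal inclusion $P\subset Q$. (Already for $X=\w$: replacing each member $P$ of a base by $P\cup F$ with $F$ finite leaves $\bar P\cap\w^*$ unchanged but destroys inclusion.) Since your verification then uses the genuine set-theoretic inclusion $P\subset Q$ to deduce $B(P,f)\subset B(Q,g)$, the argument as written does not close.

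The repair is short and is precisely what the paper does. Fix a point $\theta\in X$ and enlarge $\mathcal P$ to $\mathcal P'=\{P\setminus B_n(\theta):P\in\mathcal P,\ n\in\w\}$; each such set still belongs to $p$ because $p$ extends the filter of cobounded sets. Then $|\mathcal P'|\le\aleph_0\cdot|\mathcal P|\le\max\{\chi(p,X^\sharp),\mathfrak d\}$, and $\mathcal P'$ \emph{is} coinitial in $(p,\subset)$: given $Q\in p$, pick $P\in\mathcal P$ with $\bar P\subset\bar Q$, note that $P\setminus Q$ is bounded (otherwise some $r\in X^\sharp$ would contain $P\setminus Q$ and witness $r\in\bar P\setminus\bar Q$), and choose $n$ with $P\setminus Q\subset B_n(\theta)$, so that $P\setminus B_n(\theta)\subset Q$. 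With $\mathcal P$ replaced by $\mathcal P'$, the rest of your proof --- the choice of $f\le g$ from the coinitial family $\mathcal E$, the monotonicity of $A\mapsto\check B(A,\cdot)$, and the cardinality count --- goes through verbatim and coincides with the paper's argument.
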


\begin{proof} Let $\kappa= \max\{\chi(p,X^\sharp),\mathfrak d\}$. Since $\chi(p,X^\sharp)\le\kappa$, there is a family $\mathcal P\subset p$ of cardinality $|\mathcal P|=\chi(p,X^\sharp)\le\kappa$ such that for each set $P\in p$ there is a set $Q\in\mathcal P$ with $\bar Q\subset\bar P$, where $\bar Q=\{q\in X^\sharp:Q\in q\}$. We claim that the complement $Q\setminus P$ is bounded. In the other case, there is an ultrafilter $q\in X^\sharp$ such that $Q\setminus P\in p$. Then $q\in \bar Q\setminus\bar P$, which is a contradiction.

Fix any point $\theta\in X$ and consider the enriched family $\mathcal P'=\{P\setminus B_n(\theta):P\in\mathcal P,\;n\in\w\}\subset p$. It is clear that $|\mathcal P'|\le\aleph_0\cdot|\mathcal P|\le\kappa$ and for each set $P\in p$ there is a set $P'\in\mathcal P'$ with $P'\subset P$.

By Lemma~\ref{l1}, the partially ordered set $(\w^{\upa\w},\le)$ has coinitiality $\coin(\w^{\upa X})\le \mathfrak d$. So, we can find a coinitial set $\F\subset \w^{\upa X}$ of cardinality $|\mathcal F|\le\mathfrak d$.

It follows that for each set $P\in p$ and a function $g\in\w^{\upa X}$ there is a set $P'\in\mathcal P'$ and a function $f\in\F$ such that $P'\subset P$ and $f\le g$. Then $p\in \bar B(P',f)\subset \bar B(P,g)$ and hence $\check p\in\check B(P',f)\subset \check B(P,g)$, which implies that $\{\check B(P,f):P\in\mathcal P',\;f\in\F\}$ is a neighborhood base at $\check p$ and $\chi(\check p,\check X)\le |\mathcal P'|\cdot|\F|\le\kappa$.
\end{proof}

\begin{lemma}\label{l6} If $\phi:X\to\w$ is a boundedly oscillating bounded-to-bounded function, then
for each ultrafilter $p\in X^\sharp$ the point $\check p\in\check X$ has character
$$\chi(\check p,\check X)\ge \chi(\phi(p),\w^*).$$
\end{lemma}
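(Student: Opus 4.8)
The plan is to show that a neighborhood base at $\check p$ in $\check X$ of cardinality $\kappa$ induces a base of the ultrafilter $\phi(p)$ in $\w^*$ of cardinality at most $\kappa$; then $\chi(\phi(p),\w^*)\le\kappa$ and taking the minimum over all $\kappa$ gives the claim. Concretely, let $\{\check B(P_\alpha,f_\alpha):\alpha<\kappa\}$ be a base of closed neighborhoods of $\check p$ in $\check X$ (Lemma~\ref{l3} guarantees such a base exists with $P_\alpha\in p$ and $f_\alpha\in\w^{\upa X}$; and $\kappa=\chi(\check p,\check X)$ suffices). The idea is that the images $\phi(B(P_\alpha,f_\alpha))\subset\w$ should form a base of $\phi(p)\in\w^*$. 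Since $\phi$ is bounded-to-bounded, $\phi(B(P_\alpha,f_\alpha))\in\phi(p)$ because $P_\alpha\subset B(P_\alpha,f_\alpha)$ and $\phi^{-1}(\phi(B(P_\alpha,f_\alpha)))\supset P_\alpha\in p$.

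First I would use the boundedly oscillating property of $\phi$ in the form of Lemma~\ref{l:bobf}: fix a bounded-to-bounded $\tilde\e\in\w^{\upa\w}$ with $D:=\sup_{x\in X}\diam\,\phi(B(x,\tilde\e\circ\phi))<\infty$. The role of this $\tilde\e$ is to control how much $\phi$ spreads out an $f$-neighborhood when $f$ is small compared to $\tilde\e\circ\phi$. Next, given an arbitrary basic neighborhood $\bar A$ of $\phi(p)$ in $\w^*$ with $A\in\phi(p)$, so $\phi^{-1}(A)\in p$, I would find $\alpha<\kappa$ with $\check B(P_\alpha,f_\alpha)\subset\check B(\phi^{-1}(A),\tilde f)$ for a suitably chosen small $\tilde f\in\w^{\upa X}$ — here I would take $\tilde f$ with $\tilde f\le\tilde\e\circ\phi$ (possible since $\tilde\e\circ\phi\in\w^{\upa X}$, as $\phi$ and $\tilde\e$ are both bounded-to-bounded). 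This inclusion of coronas is equivalent, modulo bounded sets, to $B(P_\alpha,f_\alpha)\subset B(\phi^{-1}(A),\tilde f)$ off a bounded set. Applying $\phi$ and using $\diam\,\phi(B(x,\tilde f))\le\diam\,\phi(B(x,\tilde\e\circ\phi))\le D$ at each $x\in\phi^{-1}(A)$, one gets $\phi(B(P_\alpha,f_\alpha))\subset B_D(A)$ off a finite set of integers (finiteness because $\phi$ is bounded-to-bounded and the exceptional set in $X$ is bounded). Thus $\phi(B(P_\alpha,f_\alpha))\setminus B_D(A)$ is finite, so $\phi(B(P_\alpha,f_\alpha))\subset B_D(A)$ in the sense of $\w^*$, i.e.\ $\overline{\phi(B(P_\alpha,f_\alpha))}\subset\overline{B_D(A)}$.

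This last inclusion is not quite $\overline{\phi(B(P_\alpha,f_\alpha))}\subset\bar A$, only $\subset\overline{B_D(A)}$, so the family $\{\phi(B(P_\alpha,f_\alpha)):\alpha<\kappa\}$ refines only the sets of the form $B_D(A)$, $A\in\phi(p)$. To fix this, I would observe that it is enough to have, for every $A\in\phi(p)$, some $\alpha$ with $\phi(B(P_\alpha,f_\alpha))\subseteq^* A$ \emph{itself}; this can be arranged by running the argument with $A$ replaced by a set $A'\in\phi(p)$ that is ``$D$-thin inside $A$'', e.g.\ $A'=\{n\in A:[n-D,n+D]\cap\w\subset A\}$ when $A$ has the property that $A'\in\phi(p)$ — and in general one reduces to such $A$ because the $D$-thinned versions of sets in an ultrafilter on $\w$ are cofinal in that ultrafilter only if... — and here is where care is needed: not every member of $\phi(p)$ contains such a thinning in $\phi(p)$. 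The clean workaround, which I expect to be the real content, is to \emph{first} replace $\phi$ by $\psi=\lfloor\phi/(2D+1)\rfloor$ or to compose with a finite-to-one map collapsing blocks of length $2D+1$; then $D$-perturbations become harmless, $\psi(p)$ is related to $\phi(p)$ by a finite-to-one map (hence $\chi(\psi(p),\w^*)=\chi(\phi(p),\w^*)$ since finite-to-one maps induce homeomorphisms of the relevant remainders up to finite index), and the argument of the previous paragraph now yields $\psi(B(P_\alpha,f_\alpha))\subseteq^* A$ for every $A\in\psi(p)$. The main obstacle, then, is precisely this bookkeeping: arranging that the bounded oscillation constant $D$ of $\phi$ can be absorbed so that ``$\subseteq^*$ up to a $D$-neighborhood'' upgrades to genuine ``$\subseteq^*$'', while simultaneously keeping the cardinality of the induced base at $\le\kappa$ and not disturbing the ultrafilter $\phi(p)$ beyond a finite-to-one image. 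Once that is set up, $\{\psi(B(P_\alpha,f_\alpha)):\alpha<\kappa\}$ is a base for $\psi(p)$, giving $\chi(\phi(p),\w^*)=\chi(\psi(p),\w^*)\le\kappa=\chi(\check p,\check X)$, as desired.
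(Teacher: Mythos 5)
There is a genuine gap at the pivot of your argument: the claim that the corona inclusion $\check B(P_\alpha,f_\alpha)\subset\check B(\phi^{-1}(A),\tilde f)$ is ``equivalent, modulo bounded sets'' to the set inclusion $B(P_\alpha,f_\alpha)\subset B(\phi^{-1}(A),\tilde f)$ off a bounded set. Only the trivial direction of that equivalence holds. If $R=B(P_\alpha,f_\alpha)\setminus B(\phi^{-1}(A),\tilde f)$ is unbounded and $r\in X^\sharp$ contains $R$, the corona inclusion only gives $\check r\in\check B(\phi^{-1}(A),\tilde f)$, i.e.\ $\check r=\check q$ for \emph{some} ultrafilter $q$ containing $B(\phi^{-1}(A),\tilde f)$; by Lemma~\ref{l4} this merely says that no set of $r$ is asymptotically disjoint from $B(\phi^{-1}(A),\tilde f)$, which is far weaker than $R$ being bounded. (The corona identifies parallel ultrafilters: already in $X=\IR$ the corona of $2\w$ is contained in the corona of $2\w+1$, although these sets are disjoint and unbounded.) Since points of $R$ may lie at arbitrarily large --- though not ``asymptotically disjoint'' --- distance from $B(\phi^{-1}(A),\tilde f)$, the bounded‑oscillation constant $D$ gives no control over $\phi(R)$, and your conclusion $\phi(B(P_\alpha,f_\alpha))\subset B_D(A)$ off a finite set does not follow. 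Your secondary worry (the $D$-fattening) is real but is the smaller problem; you also leave it unresolved, and the asserted identity $\chi(\psi(p),\w^*)=\chi(\phi(p),\w^*)$ for $\psi=\lfloor\phi/(2D+1)\rfloor$ itself needs an argument (it does hold here because the fibers of the collapse have uniformly bounded size, so $\phi(p)$ concentrates on a residue class on which the collapse is injective).

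The paper's proof is structured precisely to avoid the false implication. It argues by contradiction: assuming $\kappa=\chi(\check p,\check X)<\chi(\phi(p),\w^*)$, it fixes a residue class $l\w+i\in\phi(p)$ with $l>2D$ (this is the correct incarnation of your block‑collapsing idea and does absorb the $D$-perturbations), uses $\kappa<\chi(\phi(p),\w^*)$ to produce a single $Q\in\phi(p)$ such that $(l\w+i)\cap\phi(P_\alpha)\setminus Q$ is infinite for every $\alpha$, and then, for $P=\phi^{-1}(Q\cap(l\w+i))$ and the $\alpha$ with $\check B(P_\alpha,f_\alpha)\subset\check B(P,\tilde f\circ\phi)$, exhibits an unbounded set $A\subset P_\alpha$ that is \emph{genuinely asymptotically disjoint} from $B(P,\tilde f\circ\phi)$: if $a\in A$ were $f$-close to $B(P,f)$, the mod-$l$ rigidity combined with $2D<l$ would force $\phi(a)=\phi(y)$ for some $y\in P$, contradicting $\phi(a)\notin Q\ni\phi(y)$. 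Lemma~\ref{l4} then yields a point of $\check B(P_\alpha,f_\alpha)\setminus\check B(P,\tilde f\circ\phi)$, the desired contradiction. The missing idea in your proposal is exactly this: one must detect the failure of refinement through asymptotic disjointness (Lemma~\ref{l4}) rather than through containment of the underlying subsets of $X$.
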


\begin{proof} Assume conversely that the cardinal $\kappa=\chi(\check p,\check X)$ is smaller that $\chi(\phi(p),\w^*)$. Using Lemma~\ref{l3}, choose a transfinite sequence of pairs $(P_\alpha,f_\alpha)\in p\times\w^{\upa X}$, $\alpha<\kappa$, such that for each pair $(P,f)\in p\times\w^{\upa X}$ there is an ordinal $\alpha<\kappa$ with $\check B(P_\alpha,f_\alpha)\subset \check B(P,f)$.

By Lemma~\ref{l:bobf}, there is a function
$\tilde f\in\w^{\upa \w}$ such that $$D=\sup_{x\in X}\diam\,\phi\big(B(x,\tilde f\circ \phi)\big)<\infty.$$ Let $f=\tilde f\circ\phi$ and choose any natural number $l>2D$.

Since $\phi(p)$ is an ultrafilter on $\w=\bigcup_{i=0}^{l-1}l\w+i$, there is a non-negative integer number $i<d$ such that the set $l\w+i=\{ln+i:n\in\w\}$ belongs to $\phi(p)$.

For every $\alpha<\kappa$ consider the set $Q_\alpha=(l\w+i)\cap \phi(P_\alpha)\in\phi(p)$. Since the family $\{Q_\alpha\}_{\alpha<\kappa}$ has cardinality $\le\kappa<\chi(\phi(p),\w^*)$, there exists a set $Q\in\phi(p)$ such that $Q_\alpha\setminus Q$ is infinite for all $\alpha<\kappa$.

Let $P=\phi^{-1}(Q\cap (l\w+i))$ and for the neighborhood $\check B(P,g)$ of $\check p$ in $\check X$  find an ordinal $\alpha<\kappa$ such that $\check B(P_\alpha,f_\alpha)\subset\check B(P,f)$. By the choice of the set $Q$, the complement  $Q_\alpha\setminus Q$ is infinite. Then we can construct a sequence of points $(a_k)_{k\in\w}$ such that $\phi(a_k)\in Q_\alpha\setminus Q$ and $\phi(a_{k+1})>\phi(a_k)$ for every $k\in\w$.

The set $A=\{a_k\}_{k\in\w}$ is not bounded because it has infinite image $\phi(A)\subset\w$ under the bounded-to-bounded function $\phi$.

We claim that the sets $A$ and $B(P,f)$ are asymptotically disjoint.
This will follow as soon as we check that $$d(a_k,B(P,f))\ge f(a_k)=\tilde f\circ\phi(a_k).$$Assume conversely that $d(a_k,x)<f(a_k)$ for some $x\in B(P,f)$ and find a point $y\in P$ such that $x\in B(y,f)$. The choice of the function $f=\tilde f\circ\phi$ guarantees that $|\phi(a_k)-\phi(x)|\le \diam \,\phi(B(a_k,f))\le D$ and $|\phi(x)-\phi(y)|\le \diam \,\phi(B(y,f))\le D$.
Taking into account that $\phi(a_k)\in Q_\alpha\subset l\w+i$ and $\phi(y)\in \phi(P)\subset l\w+i$, we conclude that $\phi(a_k)-\phi(y)\in l\IZ$.
This fact combined with the upper bound
$$|\phi(a_k)-\phi(y)|\le|\phi(a_k)-\phi(x)|+|\phi(x)-\phi(y)|\le D+D<l$$implies that $\phi(a_k)=\phi(y)$, which is not possible as $\phi(y)\in Q$ and $\phi(a_k)\in Q_\alpha\setminus Q$.

This contradiction shows that the sets $A$ and $B(P,f)$ are asymptotically disjoint. Therefore, there exists $q\in A^\sharp$ such that $\check q\notin\check B(P,f)$ according to Lemma~\ref{l4}.
On the other hand, $A\subset P_\alpha\subset B(P_\alpha,f_\alpha)$ implies $\check q\in \check B(P_\alpha,f_\alpha)\subset\check B(P,f)$.
This contradiction completes the proof.
\end{proof}

\begin{lemma}\label{l7} If the space $X$ has no asymptotically isolated balls, then for each boundedly oscillating bounded-to-bounded function $\phi:X\to\w$ and each ultrafilter $p\in X^\sharp$ the point $\check p\in\check X$ has character $\chi(\check p,\check X)\ge \mathfrak q(\phi(p))$.
\end{lemma}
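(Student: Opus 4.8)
The plan is to mimic the proof of Lemma~\ref{l6}, but now exploiting the absence of asymptotically isolated balls in order to replace the ``character'' lower bound $\chi(\phi(p),\w^*)$ by the ``coinitiality'' lower bound $\mathfrak q(\phi(p))=\coin(\w^{\upa\w},\le_{\phi(p)})$. Assume for contradiction that $\kappa:=\chi(\check p,\check X)<\mathfrak q(\phi(p))$, and fix a transfinite sequence of pairs $(P_\alpha,f_\alpha)\in p\times\w^{\upa X}$, $\alpha<\kappa$, which is coinitial in the sense of Lemma~\ref{l3}, i.e.\ $\{\check B(P_\alpha,f_\alpha):\alpha<\kappa\}$ is a neighborhood base at $\check p$. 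As in Lemma~\ref{l6}, use Lemma~\ref{l:bobf} to pick $\tilde f\in\w^{\upa\w}$ with $D:=\sup_{x\in X}\diam\,\phi(B(x,\tilde f\circ\phi))<\infty$, set $f:=\tilde f\circ\phi$, fix $l>2D$, and find $i<l$ with $l\w+i\in\phi(p)$; restrict attention to this arithmetic progression throughout (passing $P_\alpha$ to $\phi^{-1}(l\w+i)\cap P_\alpha$, which is still in $p$).

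The new ingredient is how to turn each $f_\alpha$ into a function on $\w$ that the coinitiality hypothesis can dominate from below. First I would use the hypothesis ``$X$ has no asymptotically isolated balls'' to arrange that the balls $B(x,f_\alpha)$ genuinely grow: for each $\alpha$ define $g_\alpha\in\w^{\upa\w}$ by something like $g_\alpha(n)=\min\{\,\phi(y):y\in X,\ \phi(y)=n,\ d(y,\text{some point of }P_\alpha\text{ with }\phi\text{-value}\ge n)\le f_\alpha\,\}$ — more precisely, a function measuring, along the progression $l\w+i$, how far $\phi$ must jump before the $f_\alpha$-neighborhood of $P_\alpha$ fails to catch up. The point of having no asymptotically isolated balls is that $B(x,f_\alpha)$ is never equal to a single ball that is cut off from the rest, so these $g_\alpha$ are honest members of $\w^{\upa\w}$ and not eventually trivial. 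Since $\kappa<\mathfrak q(\phi(p))$, the family $\{g_\alpha:\alpha<\kappa\}$ is \emph{not} coinitial in $(\w^{\upa\w},\le_{\phi(p)})$, so there is $h\in\w^{\upa\w}$ with $h\le_{\phi(p)} g_\alpha$ failing for every $\alpha$; equivalently, for each $\alpha$ the set $\{n:g_\alpha(n)<h(n)\}$ (or $\{n: h(n)< g_\alpha(n)\}$, depending on the exact convention chosen) belongs to $\phi(p)$, and this is the set along which I will build a witness sequence.

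With $h$ in hand, set $P:=\phi^{-1}(\{n\in l\w+i: h\text{-defined}\})$ (or a suitable $h$-thickening of $\phi^{-1}$ of a set in $\phi(p)$), which gives a neighborhood $\check B(P,f)$ of $\check p$; choose $\alpha<\kappa$ with $\check B(P_\alpha,f_\alpha)\subset\check B(P,f)$. Now, exactly as in Lemma~\ref{l6}, along the infinite set where $g_\alpha$ is beaten by $h$ I construct a sequence $(a_k)_{k\in\w}$ with $\phi(a_k)$ strictly increasing in $l\w+i$, lying in $P_\alpha$, but so spread out (this is where $h$ versus $g_\alpha$ is used) that $A:=\{a_k\}$ is asymptotically disjoint from $B(P,f)$: the verification is the modular-arithmetic estimate $|\phi(a_k)-\phi(y)|\le 2D<l$ forcing $\phi(a_k)=\phi(y)$ for any $y\in P$ within distance $f(a_k)$, contradicting the choice of $h$. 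Then Lemma~\ref{l4} gives $q\in A^\sharp$ with $\check q\notin\check B(P,f)$, while $A\subset P_\alpha$ forces $\check q\in\check B(P_\alpha,f_\alpha)\subset\check B(P,f)$ — the desired contradiction, whence $\chi(\check p,\check X)\ge\mathfrak q(\phi(p))$.

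The main obstacle I anticipate is pinning down the correct definition of $g_\alpha$ and the exact sense in which ``no asymptotically isolated balls'' guarantees $g_\alpha\in\w^{\upa\w}$ (finite-to-one) and guarantees that beating $g_\alpha$ by $h$ along a set in $\phi(p)$ really produces a sequence whose $f$-neighborhoods stay away from $B(P,f)$; getting the direction of the inequality $\le_{\phi(p)}$ to match the geometry (small radius $\leftrightarrow$ large gap in $\phi$-values, a ``$\max$'' inversion as in the definition of $\bar f$ in Lemma~\ref{l1}) is the delicate bookkeeping. Once that correspondence is set up correctly, the rest is the same asymptotic-disjointness argument already carried out in Lemma~\ref{l6}.
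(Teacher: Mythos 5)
Your general setup matches the paper's: argue by contradiction from a neighborhood base $\{\check B(P_\alpha,f_\alpha)\}_{\alpha<\kappa}$ with $\kappa<\mathfrak q(\phi(p))$, project each radius down to $\tilde f_\alpha\in\w^{\upa\w}$ (the correct formula is $\tilde f_\alpha(n)=\min f_\alpha\big(\phi^{-1}([n,\infty))\big)$, which needs no hypothesis on balls at all --- your $g_\alpha$, with $\phi(y)=n$ appearing inside a minimum of $\phi$-values, is vacuous as written), use the linearity of $\le_{\phi(p)}$ and $\kappa<\coin(\w^{\upa\w},\le_{\phi(p)})$ to get a \emph{single} $\tilde f$ with $\tilde f\le_{\phi(p)}\tilde f_\alpha$ for all $\alpha$ (note the direction: you need a common lower bound, not a function that fails to be bounded below), and keep the congruence trick with $l\w+i$. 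But the heart of the proof is missing, and the route you sketch for the witness set cannot work. If $P$ is a $\phi$-preimage and $A=\{a_k\}$ is to be asymptotically disjoint from $B(P,f)$ because the congruence estimate forces $\phi(a_k)=\phi(y)$ and this is ``a contradiction,'' then the contradiction must come from the $\phi$-values of the $a_k$ avoiding some set in $\phi(p)$ while still meeting every $\phi(P_\alpha)$ --- that is exactly the argument of Lemma~\ref{l6}, and it yields the lower bound $\chi(\phi(p),\w^*)$, which is a different (and in general incomparable) cardinal from $\mathfrak q(\phi(p))$. Lemma~\ref{l7} must find its witnesses at the \emph{same} $\phi$-values as points of $P$, by metric separation inside the fibers; when $\phi(a_k)=\phi(y)$ occurs it is not a contradiction but the start of the distance estimate.

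Concretely, three ingredients are absent from your plan. First, the actual use of ``no asymptotically isolated balls'' is to produce $\rho\in\w^{\upa\w}$ with $B(x,\rho(n))\not\subset B(x,n)$ for all $x\in X$, $n\in\w$: every ball of radius $n$ contains a point at distance in $(n,\rho(n)]$ from its center. Second, one needs a second, much smaller radius $\tilde g$ calibrated so that $6\tilde g(n)\le\rho(6\tilde g(n))\le\frac14\tilde f(n)$; the neighborhood of $\check p$ that gets contradicted is $\check B(P,g)$ with $g=\tilde g\circ\phi$, not $\check B(P,f)$, and the whole argument lives between the two scales $g\ll f\le_{\phi(p)} f_\alpha$. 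Third, for each $n$ one fixes a maximal $\tilde f(n)$-separated set $S_n\subset\phi^{-1}(n)$ and partitions $X_i=\phi^{-1}(l\w+i)$ into $B_i=X_i\cap\bigcup_{n\in l\w+i}B(S_n,2g)$ and $A_i=X_i\setminus B_i$; the ultrafilter $p$ selects one piece as $P$, and in either case one finds points of $B(P_\alpha,f_\alpha)$ that stay $\gtrsim\tilde g(n)$ away from $B(P,g)$: if $P=A_i$, the maximality of $S_{\phi(a_k)}$ gives $s_k\in S_{\phi(a_k)}$ within $\tilde f(\phi(a_k))\le f_\alpha(a_k)$ of $a_k$ but $2g$-far from $P$; if $P=B_i$, the function $\rho$ gives $b_k$ with $6g(a_k)<d(a_k,b_k)\le\rho(6g(a_k))\le f_\alpha(a_k)$, and the $\tilde f(n)$-separation of $S_n$ drives the final estimate. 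Without this two-scale construction and the $A_i/B_i$ dichotomy your argument collapses back onto Lemma~\ref{l6} and does not establish the bound $\mathfrak q(\phi(p))$.
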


\begin{proof} Given any ultrafilter $p\in X^\sharp$, we need to check that $\chi(\check p)\ge\mathfrak q(\phi(p))$. To derive a contradiction, assume that the cardinal $\kappa=\chi(\check p)$ is smaller than $\mathfrak q(\phi(p))$.

Using Lemma~\ref{l3}, choose a transfinite sequence of pairs $\{(P_\alpha,f_\alpha)\}_{\alpha<\kappa}\subset p\times \w^{\upa X}$ such that for each $(P,f)\in p\times\w^{\upa X}$ there is $\alpha<\chi(\check p)$ such that $\check B(P_\alpha,f_\alpha)\subset \check B(P,f)$.

For every $\alpha<\kappa$ choose a bounded-to-bounded function $\tilde f_\alpha:\w\to\w$ such that $\tilde f_\alpha\circ\phi \le f_\alpha$. Such a function $\tilde f_\alpha$ can be defined by the formula  $\tilde f_\alpha(n)=\min f_\alpha \big(\phi^{-1}([n,\infty))\big)$ for $n\in\w$. Since $\kappa<\mathfrak q(\phi(p))=\coin(\w^{\upa\w},\le_{\phi(p)})$, there exists a non-decreasing function $\tilde f\in\w^{\upa \w}$ such that $\tilde f\le_{\phi(p)} \tilde f_\alpha$ for all $\alpha<\kappa$.

Since the function $\phi:X\to\w$ is boundedly oscillating and bounded-to-bounded we can replace $\tilde f$ by a smaller function, if necessary and assume additionally that
$$D=\sup_{x\in X}\diam\,\phi(B(x,\tilde f\circ\phi))<\infty,$$
see Lemma~\ref{l:bobf}. Let $f=\tilde f\circ\phi\in\w^{\upa X}$ and choose an integer number $l>3D$.

Since $X$ has no asymptotically isolated balls, there exists a non-decreasing function $\rho\in \w^{\upa\w}$ such that $\rho(n)\ge n$ and $B(x,\rho(n))\not\subset B(x,n)$ for all $n\in\w$ and $x\in X$. Let $n_0\ge D$ be an integer number such that $\tilde f(n_0)\ge 4\rho(0)$. For every $n<n_0$ put $g(n)=0$ and
for every $n\ge n_0$ let $\tilde g(n)$ be the largest number $m\in\w$ such that $\rho(6m)\le \frac14\tilde f(n)$.
In this way we define a non-decreasing bounded-to-bounded function $\tilde g:\w\to\w$ such that
$$6\tilde g(n)\le\rho(6\tilde g(n))\le\tfrac14\tilde f(n)\mbox{ \ for all \ $n\ge n_0$}.$$ The function $\tilde g$ induces a bounded-to-bounded function $g=\tilde g\circ\phi:X\to\w$.

For every $n\in\w$ using Zorn's Lemma, choose a maximal subset $S_n\subset \phi^{-1}(n)$, which is {\em $\tilde f(n)$-separated} in the sense that $d(x,y)\ge \tilde f(n)$ for any distinct points $x,y\in S_n$.

For every $i<l$, consider the set $X_i=\phi^{-1}(l\w+i)\subset X$ where $l\w+i=\{ln+i:n\in\w\}$. Divide each set $X_i$ into two subsets
$$B_i=X_i\cap\bigcup_{n\in l\w+i}B(S_n,2g)\mbox{ \ and \ }A_i=X_i\setminus B_i.$$

Since $p$ is an ultrafilter, there is a set $P\in p$ such that $P=A_i$ or $P=B_i$ for some $0\le i<l$.
By Lemma~\ref{l3}, the set $\check B(P,g)$ is a neighborhood of $\check p$ in $\check X$, so we can find an ordinal $\alpha<\kappa$ such that $\check B(P_\alpha,f_\alpha)\subset\check B(P,g)$.

By the choice of the function $\tilde f$, the set $\tilde Q_\alpha=\{n\in \w:\tilde f(n)\le\tilde  f_\alpha(n)\}$ belongs to the ultrafilter $\phi(p)$. Then the set $$Q_\alpha=P\cap P_\alpha\cap\phi^{-1}\big(\tilde Q_\alpha\cap (l\w+i)\big)$$ belongs to the ultrafilter $p$ and hence is unbounded.
This allows us to choose a sequence of points $(a_k)_{k\in\w}$ in $Q_\alpha$ such that $\phi(a_{k+1})>\phi(a_k)+2>n_0+2$ for every $k\in\w$.

Now we consider two cases.
\smallskip

1) $P=A_i$. For every $k\in\w$ the maximality of the $\tilde f(\phi(a_k))$-separated set $S_{\phi(a_k)}\subset \phi^{-1}(\phi(a_k))\subset X_i$ yields a point $s_k\in S_{\phi(a_k)}$ such that $d(a_k,s_k)<\tilde f(\phi(a_k))=f(a_k)$. Since $\phi(s_k)=\phi(a_k)\to\infty$, the set $\Sigma=\{s_k\}_{k\in\w}$ is unbounded and hence belongs to some ultrafilter $q\in X^\sharp$.

We claim that $\check q\in \check B(P_\alpha,f_\alpha)\setminus \check B(P,g)$, which will contradict the choice of $\alpha$.

To see that $\check q\in\check B(P_\alpha,f_\alpha)$, observe that for every $k\in\w$ we get $\phi(a_k)\in \tilde Q_\alpha$ and hence $\tilde f\circ\phi(a_k)\le \tilde f_\alpha\circ\phi(a_k)\le f_\alpha(a_k)$. This implies
$$s_k\in B(a_k,\tilde f\circ\tilde \phi(a_k))\subset B(a_k,f_\alpha)\subset B(P_\alpha,f_\alpha)$$and $\Sigma\subset B(P_\alpha,f_\alpha)$.

Lemma~\ref{l4} will imply that $\check q\notin \check B(P,g)$ as soon as we show that the sets $\Sigma=\{s_k\}_{k\in\w}$ and $B(P,g)$ are asymptotically disjoint.
 This will follow as soon as we check that
$d(s_k,B(P,g))\ge g(s_k)$ for every $k\in\w$. Assume conversely that $d(s_k,x)< g(s_k)$ for some $x\in B(P,g)$. Since $d(s_k,x)<g(s_k)=\tilde g\circ\phi(s_k)\le \tilde f\circ\phi(s_k)=f(s_k)$, the choice of the function $\tilde f$ guarantees that $|\phi(x)-\phi(s_k)|\le \diam\,\phi\big(B(s_k,f)\big)\le D$.

Since $x\in B(P,g)$, there is a point $y\in P$ with $d(x,y)\le g(y)$.
The inequality $d(x,y)\le g(y)=\tilde g\circ\phi(y)\le \tilde f\circ \phi(y)$ implies that $|\phi(x)-\phi(y)|\le l$.
It follows from $\phi(s_k)-\phi(y)\in (l\w+i)-(l\w+i)=l\IZ$ and
$$|\phi(s_k)-\phi(y)|\le |\phi(s_k)-\phi(x)|+|\phi(x)-\phi(y)|\le D+D<l$$that $\phi(s_k)=\phi(y)=n$ for some number $n\in\w$. Taking into account that $y\in P=A_i=X_i\setminus B_i\subset X_i\setminus B(s_k,2\tilde g(n))$, we conclude that $d(y,s_k)>2\tilde g(n)$ and hence $$d(x,s_k)\ge d(y,s_k)-d(x,y)>2\tilde g(n)- g(\phi(y))=2\tilde g(n)-\tilde g(n)=\tilde g(n)=g(s_k),$$which contradicts our assumption.
So, the sets $\Sigma$ and $B(P,g)$ are asymptotically disjoint and $\check q\notin \check B(P,g)$.
\smallskip

2) Now consider the second case $P=B_i$. By the choice of the function $\rho$, for every $k\in\w$ there is a point $b_k\in B(a_k,\rho(6g(a_k)))\setminus B(a_k,6g(a_k))$. Since
$d(b_k,a_k)\le \rho(6g(a_k))=\rho(6\tilde g\circ\phi(a_k))\le\tilde f\circ \phi(a_k)$, the choice of the number $D$ and the function $\tilde f$ guarantees that $|\phi(b_k)-\phi(a_k)|\le D$.
Since the sequence $(\phi(a_k))_{k\in\w}$ tends to infinity, so does the sequence $(\phi(b_k))_{k\in\w}$, which implies that the set $\Sigma=\{b_k\}_{k\in\w}$ is unbounded. So we can find an ultrafilter $q\in X^\sharp$ with $\Sigma\in q$.

We claim that $\check q\in \check B(P_\alpha,f_\alpha)$.
Indeed, for every $k\in\w$ we get $\phi(a_k)\in \tilde Q_\alpha$ and hence
$$b_k\in B\big(a_k,\rho(6g(a_k))\big)\subset B(a_k,\tilde f\circ \phi(a_k))\subset B(a_k,f_\alpha(a_k))\subset B(P_\alpha,f_\alpha).$$Consequently, $\Sigma\subset B(P_\alpha,f_\alpha)$ and $\check q\in\check B(P_\alpha,f_\alpha)$.

Next, we show that $\check q\notin \check B(P,g)$. By Lemma~\ref{l4}, it suffices to show that the sets $\Sigma$ and $B(P,g)$ are asymptotically disjoint. Since $\tilde g(\phi(b_k)-D)\to\infty$, this will follow as soon as we check that
$$d(b_k,B(P,g))\ge \tilde g(\phi(b_k)-D)\mbox{ for every $k\in\w$.}$$
Assuming the converse, find a point $x\in B(P,g)$ such that $d(b_k,x)<\tilde g(\phi(b_k)-D)$.

Since $$d(a_k,b_k)\le\rho(6\tilde g(\phi(a_k)))\le \tilde f\circ\phi(a_k),$$ the choice of the number $D$ guarantees that $|\phi(a_k)-\phi(b_k)|\le D$. Taking into account that
$a_k\in P=B_i$, find a point $s_k\in S_{\phi(a_k)}$ such that $a_k\in B(s_k,2 g)$ and $\phi(a_k)=\phi(s_k)\in l\w+i$.

Since $$d(b_k,x)<\tilde g(\phi(b_k)-D)\le\tilde g(\phi(b_k))\le \tilde f(\phi(b_k)),$$the choice of the number $D$ guarantees that $|\phi(b_k)-\phi(x)|\le \diam\,\phi(B(b_k,f))\le D$. Since $x\in B(P,g)$, there is a point $y\in P$ such that $x\in B(y,g)\subset B(y,f)$ and hence $|\phi(x)-\phi(y)|\le D$.
Since $y\in P=B_i$, there is a point $s\in S_{\phi(y)}$ such that $y\in B(s,2 g)$ and $\phi(s)=\phi(y)\in l\w+i$.

Taking into account that $\phi(s)-\phi(s_k)\in (l\w+i)-(l\w+i)=l\IZ$ and $$|\phi(s)-\phi(s_k)|\le|\phi(s)-\phi(y)|+|\phi(y)-\phi(x)|+|\phi(x)-\phi(b_k)|+|\phi(b_k)-\phi(a_k)|+|\phi(a_k)-\phi(s_k)|\le
0+D+D+D+0<l,$$ we conclude that $\phi(s)=\phi(s_k)$. Let $n=\phi(s)=\phi(s_k)=\phi(a_k)=\phi(y)$.

If $s=s_k$, then
$$
\begin{aligned}
d(b_k,x)&\ge d(b_k,a_k)-d(a_k,s_k)-d(s_k,s)-d(s,y)-d(x,y)\ge\\
&\ge 6g(a_k)-2g(s_k)-0-2g(s)-g(y)=6\tilde g(\phi(a_k))-2\tilde g(\phi(s_k))-2\tilde g(\phi(s))-g(\tilde (y))=\\
&=6\tilde g(n)-2\tilde g(n)-2\tilde g(n)-\tilde g(n)=\tilde g(n)=\tilde g(\phi(a_k))\ge \tilde g(\phi(b_k)-D),
\end{aligned}
$$which contradicts the choice of the point $x$.
\smallskip

If $s\ne s_k$, then $d(s,s_k)\ge \tilde f(n)$ by the choice of the $\tilde f(n)$-separated set $S_n$ and then
$$
\begin{aligned}
d(b_k,x)&\ge d(s_k,s)-d(s_k,a_k)-d(a_k,b_k)-d(x,y)-d(y,s)\ge\\
 &\ge \tilde f(n)-2 g(s_k)-\rho(6g(a_k))-g(y)-2g(s)=\\
 &=\tilde f(n)-2\tilde g(n)-\rho(6\tilde g(n))-\tilde g(n)-2\tilde g(n)=\\
 &=\tilde f(n)-\rho(6\tilde g(n))-6\tilde g(n)\ge \tilde f(n)-\rho(6\tilde g(n))-\rho(6\tilde g(n))\ge\\
 &\ge\tilde f(n)-2\rho(6\tilde g(n))\ge \tilde f(n)-\frac12\tilde f(n)=\frac12\tilde f(n)\ge \tilde g(n)=\tilde g(\phi(a_k))\ge \tilde g(\phi(b_k)-D).
 \end{aligned}
$$
Therefore $d(b_k,B(P,g))\ge \tilde g(\phi(b_k)-D)\to \infty$, which implies that the sets $B=\{b_k\}_{k\in\w}$ and $B(P,g)$ are asymptotically disjoint and $\check q\notin \check B(P,g)$.
\end{proof}

\begin{lemma}\label{l8} If an unbounded metric space $X$ has asymptotically isolated
balls, then its corona $\check X$ contains a closed-and-open subset,
homeomorphic to $\w^*$ and hence $\mchi(\check X)\le \mchi(\w^*)=\mathfrak
u$.
\end{lemma}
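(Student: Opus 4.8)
The plan is to exhibit inside $X$ an asymptotically isolated subset $A$ whose corona $\check A=\{\check p:A\in p\in X^\sharp\}$ is homeomorphic to $\w^*$. By Lemma~\ref{l5a} such an $A$ makes $\check A$ a closed-and-open subset of $\check X$, and since an open subspace $U$ of a topological space $Y$ satisfies $\chi(y,U)=\chi(y,Y)$ for every $y\in U$ (a neighborhood base of $y$ lying inside $U$ consists of sets open in $Y$ and is cofinal among all neighborhoods of $y$ in $Y$), any point of $\check A$ realizing the minimal character $\mathfrak u=\mchi(\w^*)$ is a point of $\check X$ of character $\mathfrak u$; hence $\mchi(\check X)\le\mathfrak u$.

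Fix $\theta\in X$ and let $\e<\infty$ witness that $X$ has asymptotically isolated balls. The first step is to observe that the balls supplied by this property are forced to escape to infinity: since $X$ is unbounded one can fix a point at distance slightly above $2\e$ from $\theta$ and deduce that there is $\delta_*\ge 3\e$ such that whenever $\delta\ge\delta_*$ and $B_\e(x)=B_\delta(x)$ we must have $d(x,\theta)>\delta$ (otherwise $\theta\in B_\delta(x)=B_\e(x)$, and then that fixed point would also lie in $B_\e(x)$, which is impossible). Using this, choose recursively radii $\delta_0<\delta_1<\cdots$ tending to infinity with each $\delta_n\ge\delta_*$ and $\delta_n>n+2\e+\max_{k<n}d(b_k,\theta)$, together with points $b_n$ satisfying $B_\e(b_n)=B_{\delta_n}(b_n)$. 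Put $A_n=B_\e(b_n)$ and $A=\bigcup_{n\in\w}A_n$. The choice of the $\delta_n$ guarantees that the $A_n$ are pairwise disjoint, $\diam A_n\le 2\e$, $\dist(A_n,A_m)\ge\max(n,m)$ for $n\ne m$, and $d(b_n,\theta)\to\infty$ (so $A$ is unbounded and each $A_n$ escapes to infinity).

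Next I would check that $A$ is asymptotically isolated, which is where the ``moat'' around each $b_n$ is used: for $a\in A_n$ one has $B_{\delta_n-2\e}(a)\subset B_{\delta_n}(b_n)=A_n$. Define $g\in\w^{\upa X}$ by letting $g$ be constant on each $A_n$ with value at most $\delta_n-2\e$ yet still tending to infinity with $n$ (e.g. $\min\{n,\lfloor\delta_n-2\e\rfloor\}$) and $g(x)=\lceil d(x,\theta)\rceil$ on $X\setminus A$; then $g$ is bounded-to-bounded and $B(A,g)=\bigcup_n B(A_n,g|A_n)\subset\bigcup_n A_n=A$, so $B(A,g)=A$ and $A$ is asymptotically disjoint from $X\setminus A$. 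Hence $\check A$ is closed-and-open in $\check X$ by Lemma~\ref{l5a}. To identify $\check A$ with $\w^*$, pick one point $c_n\in A_n$ and set $C=\{c_n:n\in\w\}$; the inclusion $C\hookrightarrow A$ and the retraction $A\to C$ collapsing $A_n$ to $c_n$ are coarse (the bound $\dist(A_n,A_m)\ge\max(n,m)$ controls the retraction) and compose to the identity up to bounded distance, so $A$ is coarsely equivalent to $C$ and $\check A\cong\check C$. Finally, since $d(c_n,c_m)\ge\max(n,m)$ for $n\ne m$ and $d(c_n,\theta)\to\infty$, the bounded subsets of $C$ are exactly the finite ones, so the bijection $n\mapsto c_n$ induces a homeomorphism $\w^*\to C^\sharp$; and for any $\e'>0$ and any $P\subset C$ the set $B_{\e'}(P)\cap C$ differs from $P$ only inside the finite set $\{c_k:k\le\e'\}$, which forces the parallel relation on $C^\sharp$ to be the identity, whence $\sim$ is the identity on $C^\sharp$ and $\check C=C^\sharp\cong\w^*$.

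I expect the main obstacle to be the combination of the second and third steps: extracting from the purely existential hypothesis a single sequence of $\e$-balls that simultaneously marches off to infinity, stays mutually far apart with gaps growing without bound, and carries moats wide enough for one bounded-to-bounded function $g$ to certify $B(A,g)=A$. This is exactly the point where the hypothesis on asymptotically isolated balls is indispensable, and it is what separates this case from the general one.
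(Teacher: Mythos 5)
Your proposal is correct and follows essentially the same route as the paper: both extract a sequence of isolated $\e$-balls escaping to infinity with growing mutual separation, show that the corona of their union is a closed-and-open subset of $\check X$, and identify it with $\w^*$ via a coarse equivalence between the union and the discrete set of centers. The only cosmetic differences are that you certify clopenness through Lemma~\ref{l5a} (asymptotic isolatedness of $A$, witnessed by the function $g$) where the paper uses the slowly oscillating characteristic function of $D_\e$, and your bookkeeping for the separation of the balls (forcing each ball to lie beyond the previous ones) replaces the paper's condition $B_{\delta_n-\e}(x_k)\not\subset B_{2\e}(x_k)$.
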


\begin{proof} Since $X$ has asymptotically isolated balls, there is $\e>0$
such that for each finite $\delta\ge\e$ there is an $\e$-ball $B_\e(x)$ equal
to the $\delta$-ball $B_\delta(x)$.
In particular, for the number $\delta_0=2\e$, we can find a point $x_0\in X$
such that  $B_\e(x_0)=B_{\delta_0}(x_0)$. By induction we shall construct an
increasing sequence of real numbers $(\delta_n)_{n=1}^\infty$ and a sequence
of points $(x_n)_{n\in\w}$ in $X$ the such that for every $n\in\IN$ the
following conditions are satisfied:
\begin{enumerate}
\item $\delta_n\ge (n+2)\e$;
\item $B_{\delta_n-\e}(x_k)\not\subset B_{2\e}(x_k)$ for all $k<n$;
\item $B_{\delta_n}(x_n)=B_\e(x_n)$.
\end{enumerate}
These conditions imply that for every $k<n$ we get $d_X(x_k,x_n)\ge
\delta_n$. Assuming the opposite, we get $x_k\in B_{\delta_n}(x_n)=B_\e(x_n)$
and hence $d_X(x_k,x_n)<\e$ and
$$B_{\delta_n-\e}(x_k)\subset B_{\delta_n}(x_n)=B_\e(x_n)\subset
B_{2\e}(x_k),$$
which contradicts the condition (2).

Consider the subspace $D=\{x_n\}_{n\in\w}\subset X$
and its $\e$-neighborhood
$$D_\e=\bigcup_{n\in\w}B_\e(x_n)=\bigcup_{n\in\w}B_{\delta_n}(x_n).$$
It follows that the characteristic function $f:X\to\{0,1\}$ of the set $D_\e$
is slowly oscillating.
It induces a continuous map $\check f:\check X\to\{0,1\}$ such that the
preimage $\check f^{-1}(1)$ is a clopen subset of $\check X$ that coincides
with the corona $\check D_\e$ of the set $D_\e$.

It is easy to check that the identity embedding $e:D\to D_\e$ is a coarse
equivalence, which induces a homeomorphism $\check e:\check D\to \check
D_\e$. Since each function on $D$ is slowly oscillating, the corona $\check
D$ of $D$ coincides with the Stone-\v Cech remainder $D^\sharp=\beta
D\setminus D$ of the discrete space $D$. Consequently, the corona $\check X$
contains a clopen subset $\check D_\e$, which is homeomorphic to
$\w^*=\beta\w\setminus\w$ and hence $\mchi(\check X)\le \mchi(\check
D)=\mchi(\w^*)=\mathfrak u$.
\end{proof}

Lemmas~\ref{l5}, \ref{l6}, \ref{l7} and \ref{l2} imply the following theorem, which is the main result of this section.

\begin{theorem}\label{t:char} Let $X$ be an unbounded metric space and $\phi:X\to\w$ be a boundedly oscillating bounded-to-bounded function. For each ultrafilter $p\in X^\sharp$ the point $\check p\in\check X$ has character
\begin{enumerate}
\item $\chi(\check p,\check X)\le\max\{\chi(p,X^\sharp),\mathfrak d\}$;
\item $\chi(\check p,\check X)\ge \chi(\phi(p),\w^*)\ge\mathfrak u$;
\item $\chi(\check p,\check X)\ge \max\{\chi(\phi(p),\w^*),\mathfrak q(\phi(p))\}\ge\max\{\mathfrak u,\mathfrak d\}$ if the space $X$ has no asymptotically isolated balls.
\end{enumerate}
\end{theorem}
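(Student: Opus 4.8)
The plan is to obtain all three inequalities as a formal conjunction of results already established in Sections~2 and~4, together with the elementary observation that every point of $\w^*$ has character at least $\mchi(\w^*)=\mathfrak u$. So the proof is essentially bookkeeping: no new construction is needed, since the substantive work lives in the preceding lemmas.

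First I would dispose of item (1) by a direct appeal to Lemma~\ref{l5}, which already yields $\chi(\check p,\check X)\le\max\{\chi(p,X^\sharp),\mathfrak d\}$ for every $p\in X^\sharp$, with no hypotheses on $X$ or $\phi$. For item (2) I would apply Lemma~\ref{l6} to the given boundedly oscillating bounded-to-bounded function $\phi$, obtaining $\chi(\check p,\check X)\ge\chi(\phi(p),\w^*)$. Here one must first note that $\phi(p)=\beta\phi(p)$ is a free ultrafilter on $\w$ — this is exactly the observation recorded at the end of Section~2, valid because $\phi$ is bounded-to-bounded and $p\in X^\sharp$ — so that $\chi(\phi(p),\w^*)$ makes sense and is bounded below by the minimal character $\mchi(\w^*)=\mathfrak u$; chaining the two inequalities gives $\chi(\check p,\check X)\ge\chi(\phi(p),\w^*)\ge\mathfrak u$.

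For item (3) I would add the hypothesis that $X$ has no asymptotically isolated balls and combine two lower bounds for the same point: Lemma~\ref{l7} gives $\chi(\check p,\check X)\ge\mathfrak q(\phi(p))$, while Lemma~\ref{l6} still gives $\chi(\check p,\check X)\ge\chi(\phi(p),\w^*)$; hence $\chi(\check p,\check X)\ge\max\{\chi(\phi(p),\w^*),\mathfrak q(\phi(p))\}$. Finally I would invoke Lemma~\ref{l2} with the ultrafilter $\U=\phi(p)\in\w^*$, which asserts precisely that $\max\{\chi(\U),\mathfrak q(\U)\}\ge\max\{\mathfrak u,\mathfrak d\}$, completing the estimate.

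There is no real obstacle at this stage: the only non-trivial point is checking that $\phi(p)\in\w^*$ so that the bound $\chi(\phi(p),\w^*)\ge\mathfrak u$ and Lemma~\ref{l2} apply, and this was already settled in Section~2. All the genuinely delicate arguments — in particular the separation argument in Lemma~\ref{l7} distinguishing the cases $P=A_i$ and $P=B_i$ — have been carried out beforehand, so Theorem~\ref{t:char} follows at once by assembling Lemmas~\ref{l5}, \ref{l6}, \ref{l7} and \ref{l2}.
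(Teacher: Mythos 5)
Your proposal is correct and matches the paper exactly: the authors likewise deduce Theorem~\ref{t:char} by combining Lemmas~\ref{l5}, \ref{l6}, \ref{l7} and \ref{l2}, with item (3) obtained by applying Lemma~\ref{l2} to the ultrafilter $\phi(p)\in\w^*$. Your extra remark that $\phi(p)$ lies in $\w^*$ because $\phi$ is bounded-to-bounded and $p\in X^\sharp$ is the same observation the paper records at the end of Section~2.
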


\section{Proof of Theorem~\ref{main}}\label{s:pf-mainth}

We need to prove that for an unbounded metric space $X$ its corona $\check X$ has minimal character
\begin{itemize}
\item $\mchi(\check X)=\mathfrak u$ if $X$ has asymptotically isolated balls and
\item $\mchi(\check X)=\max\{\mathfrak u,\mathfrak d\}$, otherwise.
\end{itemize}

If $X$ has asymptotically isolated balls, then the corona $\check X$ has minimal character $\mchi(\check X)\le\mathfrak u$ by Lemma~\ref{l8}. The inequality $\mchi(\check X)\ge\mathfrak u$ follows from Theorem~\ref{t:char}(2).

If $X$ does not have asymptotically isolated balls, then $\mchi(\check X)\ge\max\{\mathfrak u,\mathfrak d\}$ by Theorem~\ref{t:char}(3). To prove the reverse inequality, take any injective function $f:\w\to X$ such that $\lim_{n\to\infty}d(f(n),f(0))=\infty$. Choose any ultrafilter $\U\in\w^*$ with $\chi(\U,\w^*)=\mathfrak u$ and consider its image $p=\beta f(\U)\in \beta X$. The choice of the function $f$ guarantees that $p\in X^\sharp$. It follows that $\chi(p,X^\sharp)=\chi(\U,\w^*)=\mathfrak u$ and then $$\mchi(\check X)\le\chi(\check p,\check X)\le \max\{\chi(p,X^\sharp),\mathfrak d\}=\max\{\mathfrak u,\mathfrak d\}$$
according to Theorem~\ref{t:char}(1).

\section{Proof of Theorem~\ref{t2}}\label{s2}

It is easy to see that the Cantor macro-cube $C=2^{<\IN}$ has no
asymptotically isolated balls.  Consequently, $\mchi(\check
C)=\max\{\mathfrak u,\mathfrak d\}=\mathfrak d$ by Theorem~\ref{main}. By
\cite{Dra}, $\dim(\check C)=\asdim(C)=0$. Now we are ready to prove the
implications $(1)\Ra(2)\Ra(3)\Ra(1)$ of Theorem~\ref{t2}. Let $(X,d_X)$ be a
metric space of bounded geometry.

$(1)\Ra(2)$. If $X$ is coarsely homeomorphic to the Cantor macro-cube
$C=2^{<\IN}$, then the coronas of $X$ and $C$ are homeomorphic according to
\cite[2.42]{Roe}.

$(2)\Ra(3)$ If the coronas $\check X$ and $\check C$ are homeomorphic, then
$\dim(\check X)=\dim(\check C)=\asdim(C)=0$ and $\mchi(\check X)=\mchi(\check
C)=\mathfrak d$.

$(3)\Ra(1)$ Assume that $\dim(\check X)=0$ and $\mchi(\check X)=\mathfrak d>\mathfrak u$. By Proposition~\ref{t:dim} and Theorem~\ref{main}(1), the metric space $X$ has asymptotic dimension zero and has no asymptotically isolated balls.  Since $X$ has bounded geometry, the characterization
theorem \cite{BZ} implies that the metric space $X$ is coarsely equivalent to the Cantor macro-cube $2^{<\IN}$.


\begin{thebibliography}{}

\bibitem{BZ} T.Banakh, I.Zarichnyi, {\em Characterizing the Cantor bi-cube in
asymptotic categories}, Groups, Geometry, and Dynamics, {\bf 5}:4 (2011) 691--728.

\bibitem{BZ2} T.Banakh, I.Zarichnyi, {\em A coarse characterization of the
Baire macro-space}, Proc. of Intern. Geometry Center, {\bf 3}:4 (2010) 6--14.

\bibitem{BZsurv} T.~Banakh, L.~Zdomskyy, {\em The coherence of semifilters: a survey. Selection principles and covering properties in topology}, 53--105, Quad. Mat., 18, Dept. Math., Seconda Univ. Napoli, Caserta, 2006.

\bibitem{BZbook}  T. Banakh,  L. Zdomskyy,  {\it Coherence of Semifilters}, book in progress.\\
\texttt{http://www.franko.lviv.ua/faculty/mechmat/Departments/Topology/booksite.html}.

\bibitem{BD} G.~Bell, A.~Dranishnikov, {\em Asymptotic dimension}, Topology
Appl. {\bf 155}:12 (2008) 1265--1296.

\bibitem{Bla86} A. Blass, \emph{Near coherence of filters, I. Cofinal equivalenceof models of arithmetic,} Notre Dame Journal of Formal Logic, \textbf{27} (1986)  579--591.

\bibitem{Blass} A.~Blass, {\em Combinatorial Cardinal
Characteristics of the Continuum}, in: Handbook of Set Theory, Ch.6, pp.395--489, Springer, Dordrecht, 2010.

\bibitem{Canjar} M.~Canjar, {\em Cofinalities of countable ultraproducts: the existence theorem},
Notre Dame J. Formal Logic {\bf 30}:4 (1989) 539--542.

\bibitem{vD} E.~van Douwen, {\em The integers and topology}, in: Handbook of
set-theoretic topology, 111--167, North-Holland, Amsterdam, 1984.

\bibitem{Dra} A. Dranishnikov, {\em Asymptotic topology}, Uspekhi Mat. Nauk
{\bf 55}:6 (2000) 71--116.

\bibitem{DKU} A.N.~Dranishnikov, J.~Keesling, V.V.~Uspenskij, {\em
On the Higson corona of uniformly contractible spaces}, Topology {\bf 37}:4 (1998) 791--803.

\bibitem{DZ} A.~Dranishnikov, M.~Zarichnyi, {\em Universal spaces for
asymptotic dimension},
Topology Appl. {\bf 140}:2-3 (2004) 203--225.

\bibitem{Fremlin} D.~Fremlin, {\em Consequences of Martin's Axiom}, Cambridge
Tracts in Mathematics. V.{\bf 84}, Cambridge University Press, London,
1984.

\bibitem{Ke} A.~Kechris, {\em Classical Descriptive Set Theory}, Springer-
Verlag, New York, 1995.

\bibitem{LafZhu98}  C. Laflamme, J.-P. Zhu,
{\it The Rudin-Blass ordering of ultrafilters,}
J. Symbolic Logic  \textbf{63}  (1998)   584--592.

\bibitem{Prot03} I.V.~Protasov, {\em Normal ball structures}, Mat. Stud. {\bf 20} (2003) 3--16.

\bibitem{Prot05} I.V.~Protasov, {\em Coronas of balleans}, Topology Appl. {\bf 149}:1-3
(2005) 149--160.

\bibitem{Prot11} I.V.Protasov, {\em Coronas of ultrametric spaces}, Coment.
Math. Univ. Carolin. {\bf 52} (2011) 303--307.

\bibitem{Roe} J.Roe, {\em Lectures on coarse geometry}, Amer. Math. Soc.,
Providence, RI, 2003.

\bibitem{Vau} J.~Vaughan, {\em Small uncountable cardinals and topology}, in:
Open Problems in Topology, 195--218, North-Holland, Amsterdam, 1990.


\end{thebibliography}
\end{document}